\newtheorem{theorem}{Theorem}[section]
\newtheorem{lemma}[theorem]{Lemma}
\newtheorem{proposition}[theorem]{Proposition}
\newtheorem{corollary}[theorem]{Corollary}
\theoremstyle{definition}
\newtheorem{definition}[theorem]{Definition}
\newtheorem{assumption}[theorem]{Assumption}
\theoremstyle{remark}
\newtheorem{example}[theorem]{Example}
\newcommand{\itpr}[1]{[\![#1]\!]} 
\newcommand{\cttmodel}[1]{\widetilde{#1}} 
\newcommand{\trunc}[1]{\left\|#1\right\|} 
\newcommand{\I}{\mathbb{I}}
\newcommand{\N}{\mathbb{N}} 
\newcommand{\Izero}{0}
\newcommand{\Ione}{1}
\newcommand{\Imin}{\sqcap}
\newcommand{\Imax}{\sqcup}
\newcommand{\loc}{\mathcal{L}}  
\newcommand{\Initial}{\mathbf{0}} 
\newcommand{\Id}{\mathsf{Id}} 
\newcommand{\Path}{\mathsf{Path}} 
\newcommand{\Jop}{\mathcal{J}} 
\newcommand{\Kop}{\mathcal{K}} 
\newcommand{\ext}{\mathsf{ext}} 
\newcommand{\isext}{\mathsf{isext}} 
\newcommand{\inl}{\mathsf{inl}} 
\newcommand{\inr}{\mathsf{inr}} 
\newcommand{\Bool}{\mathbf{2}}  
\newcommand{\PSh}{\mathcal{P}}  
\newcommand{\op}{\mathrm{op}}   
\newcommand{\Asm}{\mathbf{Asm}}
\newcommand{\CAsm}{\mathbf{CAsm}} 
\newcommand{\cat}[1]{\mathcal{#1}} 
\newcommand{\Set}{\mathbf{Set}}    
\newcommand{\Ctx}{\mathbb{C}}  
\newcommand{\Type}{\mathbb{T}} 
\newcommand{\Elem}{\mathbb{E}} 
\newcommand{\Prop}{\mathbb{F}} 
\newcommand{\judg}{\mathcal{J}} 
\newcommand{\univ}{\mathcal{U}} 
\newcommand{\To}{\Rightarrow}
\newcommand{\FromTo}{\Leftrightarrow}
\title{On Church's Thesis in Cubical Assemblies}
\author{Andrew W Swan and Taichi Uemura}
\begin{document}
\maketitle

\begin{abstract}
  We show that Church's thesis, the axiom stating that all functions
  on the naturals are computable, does not hold in the cubical
  assemblies model of cubical type theory.

  We show that nevertheless Church's thesis is consistent with
  univalent type theory by constructing a reflective subuniverse of
  cubical assemblies where it holds.
\end{abstract}

\section{Introduction}
\label{sec:introduction}

One of the main branches of constructive mathematics is that of
recursive or ``Russian'' constructivism, where to justify the
existence of mathematical objects, one must show how to compute them.
A rather extreme interpretation of this
philosophy is the axiom of Church's thesis, which states that all
functions from \(\N\) to \(\N\) are computable. Despite (or perhaps
because) of its highly non-classical nature it has been well studied
by logicians and turns out to be consistent with a wide variety of
formal theories for constructive mathematics. This is usually proved
using realizability models based on computable functions, starting
with Kleene's model of Heyting arithmetic\cite{kleene1945}, but with
many later variants and generalisations. See for example \cite[Chapter
4, Section 4]{troelstravandalen} for a standard reference.

When interpreting Church's thesis in type theory an additional
complication is introduced. Logical statements are usually interpreted
in type theory using the propositions-as-types
interpretation. Applying this to Church's thesis would give us the
type below.
\[
  \prod_{f : \N \to \N}\sum_{e : \N}\prod_{x : \N}\sum_{z : \N}T(e, x, z) \land U(z)
  = f(x)
\]
However it is straightforward to use function extensionality to show
that this type is empty.\footnote{Surprisingly, however,
  this version is consistent with intensional type theory as
  long as one drops the \(\xi\) rule, which was proved by Ishihara,
  Maietti, Maschio and Streicher in
  \cite{ishiharamaiettimaschiostreicher}. They leave the
  case where one has the \(\xi\) rule but not function
  extensionality open and to the authors' knowledge it remains an
  open problem.}
It is therefore impossible in any case to show that
the above ``untruncated'' version of Church's thesis is consistent
with univalence, since univalence implies function extensionality
\cite[Theorem 4.9.4]{hottbook}.

To have any hope of showing Church's thesis is consistent with
univalence we need a different formulation. We will use the
interpretation of logical statements advocated in \cite[Section
3.7]{hottbook}, and commonly used in homotopy type theory and
elsewhere. In this
approach one uses the higher inductive type of propositional
truncation at disjunction and existential quantifiers, which ensures that
the resulting type is always an hproposition (i.e. that any two of its
elements are equal). This yields the following version of Church's
thesis, which is the one we will study here.
\[
  \prod_{f : \N \to \N}\trunc{\sum_{e : \N}\prod_{x : \N}\sum_{z : \N}T(e, x, z) \times U(z) =
  f(x)}
\]

It is well known that Church's thesis holds in the internal logic of
Hyland's effective topos (see for instance \cite[Section 3.1]{vanoosten} for a
standard reference). Similar arguments show that in fact it already
holds in its simpler subcategory of assemblies, and even in cubical
assemblies, when they are viewed as regular locally cartesian closed
categories and thereby, following Awodey and Bauer in
\cite{awodeybauer} or Maietti in \cite{maietti} as models of
extensional type theory with propositional truncation. However, the
interpretation of cubical type theory in cubical assemblies due to the
second author \cite{cubicalassemblies} is very different to the
interpretation of extensional
type theory. We draw attention in particular to the fact that for
extensional type theory hpropositions are implemented as maps where in
the internal logic each fibre has at most one
element.\footnote{Equivalently, the map is a monomorphism, and for
  this reason hpropositions are sometimes referred to as \emph{mono
    types}.}
On the other hand in the interpretation of cubical type
theory, each fibre can have multiple elements as long as any two
elements are joined by a path, telling us to always treat them as
``propositionally equal.''
For propositional truncation we don't strictly identify
elements by quotienting, but instead add new paths. Our first result
is that Church's thesis is in fact false in the interpretation of
cubical type theory in cubical assemblies, even though it holds in
the internal logic.

To show Church's thesis is consistent with univalence we will combine
cubical assemblies with the work of Rijke, Shulman and Spitters on
modalities and \(\Sigma\)-closed reflective subuniverses in
\cite{rijkeshulmanspitters}. We will construct a reflective
subuniverse where Church's thesis is forced to hold, and then use
properties of cubical assemblies to show that this reflective
subuniverse is non trivial. Our model can also be viewed as a kind of
stack model akin to those used by Coquand for various independence and
consistency results, including the independence of countable choice
from homotopy type theory \cite{coquandcubicalstacks}, although our
formulation will be quite different to Coquand's.

\subsection*{Acknowledgements}
\label{sec:acknowledgements}

The first author is grateful for some helpful discussions on higher
inductive types with Simon Huber, Anders M\"{o}rtberg and Christian
Sattler at the Hausdorff Research Institute for Mathematics during the
trimester program Types, Sets and Constructions. The second author is
supported by the research programme ``The Computational Content of
Homotopy Type Theory'' with project number 613.001.602, which is
financed by the Netherlands Organisation for Scientific Research
(NWO). We are grateful to Benno van den Berg for helpful comments and
corrections.

\section{Models of Type Theories}
\label{sec:models-type-theories}

In this paper we use models of different kinds of type theory:
extensional dependent type theory; intensional dependent type theory
(with the univalence axiom). All of them are based on the notion of a
category with families \cite{dybjer1996internal}.

\begin{definition}
  Let \(\cat{C}\) be a category. A \emph{cwf-structure over
    \(\cat{C}\)} is a pair \((T, E)\) of presheaves \(T :
  \cat{C}^{\op} \to \Set\) and \(E :
  \left(\int_{\cat{C}}T\right)^{\op} \to \Set\) such that, for any
  object \(\Gamma \in \cat{C}\) and element \(X \in T(\Gamma)\), the
  presheaf
  \[
    \left(\cat{C}/\Gamma\right)^{\op} \ni (f : \Delta \to \Gamma)
    \mapsto E(\Delta, X \cdot f) \in \Set
  \]
  is representable. The representing object for this presheaf is
  denoted by \(\chi(X) : \{X\} \to \Gamma\) or
  \(\chi(X) : \Gamma.X \to \Gamma\). A \emph{category with families},
  cwf in short, is a triple
  \(\cat{E} = (\Ctx^{\cat{E}}, \Type^{\cat{E}}, \Elem^{\cat{E}})\)
  such that \(\Ctx^{\cat{E}}\) is a category with a terminal object
  and \((\Type^{\cat{E}}, \Elem^{\cat{E}})\) is a cwf-structure over
  \(\Ctx^{\cat{E}}\).
\end{definition}

In general a model \(\cat{E}\) of a type theory consists of a category
with families \((\Ctx^{\cat{E}}, \Type^{\cat{E}}, \Elem^{\cat{E}})\)
and algebraic operations on the presheaves \(\Type^{\cat{E}}\) and
\(\Elem^{\cat{E}}\). An object \(\Gamma\) of \(\Ctx^{\cat{E}}\) is
called a \emph{context}. An element \(X\) of
\(\Type^{\cat{E}}(\Gamma)\) is called a \emph{type} and written
\(\Gamma \vdash_{\cat{E}} X\). An element \(a\) of
\(\Elem^{\cat{E}}(\Gamma, X)\) is called an \emph{element of type
  \(X\)} and written \(\Gamma \vdash_{\cat{E}} a : X\). The subscript
of \(\vdash_{\cat{E}}\) is omitted when the model \(\cat{E}\) is clear
from the context. An algebraic operation on those presheaves is
expressed by the schema
\[
  \inferrule
  {\Gamma \vdash \judg_{1} \\
    \dots \\
    \Gamma \vdash \judg_{n}}
  {\Gamma \vdash A(\judg_{1}, \dots, \judg_{n})}
\]
where \(\Gamma \vdash \judg_{j}\) and
\(\Gamma \vdash A(\judg_{1}, \dots, \judg_{n})\) are either of the
form \(\Gamma.X_{1}.\dots.X_{m} \vdash Y\) or of the form
\(\Gamma.X_{1}.\dots.X_{m} \vdash b : Y\) with
\((\Gamma \vdash X_{1}), \dots, (\Gamma.X_{1}.\dots.X_{m-1} \vdash
X_{m})\). In this schema we always assume that the operation
\(A(\judg_{1}, \dots, \judg_{n})\) is stable under reindexing: if
\(f : \Delta \to \Gamma\) is a morphism in \(\Ctx^{\cat{E}}\), then we
have
\(A(\judg_{1}, \dots, \judg_{n}) \cdot f = A(\judg_{1} \cdot f, \dots,
\judg_{n} \cdot f)\).

\begin{example}
  Let \(\cat{E}\) be a cwf. We say \(\cat{E}\) supports
  \emph{dependent product types} if it has operations
  \begin{mathpar}
    \inferrule
    {\Gamma \vdash X \\
      \Gamma.X \vdash Y}
    {\Gamma \vdash \Pi(X, Y)}
    \and
    \inferrule
    {\Gamma \vdash X \\
      \Gamma.X \vdash Y \\
      \Gamma.X \vdash b : Y}
    {\Gamma \vdash \lambda(X, Y, b) : \Pi(X, Y)}
  \end{mathpar}
  such that the map \(\Elem(\Gamma.X, Y) \ni b \mapsto \lambda(X, Y,
  b) \in \Elem(\Gamma, \Pi(X, Y))\) is bijective.
\end{example}

It is a kind of routine to describe other type constructors such as
dependent sum types, extensional and intensional identity types,
inductive types, higher inductive types and universes.

For a model \(\cat{E}\) of a type theory, we denote by
\(\itpr{-}^{\cat{E}}\) the interpretation of the type theory in the
model \(\cat{E}\).

\begin{definition}
  By a \emph{model of univalent type theory} we mean a cwf that
  supports dependent product types, dependent sum type, intensional
  identity types, unit type, finite coproducts, natural numbers,
  propositional truncation and a countable chain
  \[
    \univ_{0} : \univ_{1} : \univ_{2} : \dots
  \]
  of univalent universes.
\end{definition}

\subsection{Internal Languages}
\label{sec:internal-languages}

Formally we will work with models of type theories, but we will
construct types and terms of those models in a syntactic way using
their internal languages. Let
\(\cat{E} = (\Ctx^{\cat{E}}, \Type^{\cat{E}}, \Elem^{\cat{E}},
\dots)\) be a model of a type theory. For a context
\(\Gamma \in \Ctx^{\cat{E}}\) and a type \(\Gamma \vdash X\),
we introduce a variable \(x\) and write \((\Gamma, x : X)\) for the
context \(\Gamma.X\). For another type \(\Gamma \vdash Y\),
the weakening \(\Gamma, x : X \vdash Y\) is interpreted as the
reindexing \(\Gamma.X \vdash Y \cdot \chi(X)\). For an element
\(\Gamma \vdash a : X\) and a type
\(\Gamma, x : X \vdash Y(x)\), the substitution
\(\Gamma \vdash Y(a)\) is interpreted as the reindexing
\(\Gamma \vdash Y \cdot \bar{a}\), where
\(\bar{a} : \Gamma \to \Gamma.X\) is the section of
\(\Gamma.X \to \Gamma\) corresponding to the element
\(\Gamma \vdash a : X\). All type and term constructors of the type
theory are soundly interpreted in \(\cat{E}\) in a natural way. Note
that types and terms built in the internal language are stable under
reindexing.

\subsection{\(W\)-types with Reductions}
\label{sec:w-types-with-1}

We will later use \(W\)-types with reductions to construct higher
inductive types. So that we can use them internally in type theory we
give below a new, split formulation. This is based on the
non-dependent special case of the version in
\cite{swanwtypesreductions}.

Let
\(\cat{E} = (\Ctx^{\cat{E}}, \Type^{\cat{E}}, \Elem^{\cat{E}},
\dots)\) be a model of a type theory with dependent product types,
dependent sum types and extensional identity types. Suppose that
\(\cat{E}\) has types \(1 \vdash \Prop\) and
\(\varphi : \Prop \vdash [\varphi]\) such that
\(\varphi : \Prop, x : [\varphi], y : [\varphi] \vdash x = y\). We
call an element of \(\Prop\) a cofibrant proposition. We often omit
\([-]\) and regard an element \(\varphi : \Prop\) itself as a type. A
\emph{cofibrant polynomial with reductions} over a context
\(\Gamma \in \Ctx^{\cat{E}}\) consists of the following data:
\begin{itemize}
\item a type \(\Gamma \vdash Y\) of \emph{constructors};
\item a type \(\Gamma, y : Y \vdash X(y)\) of \emph{arities};
\item a cofibrant proposition \(\Gamma, y : Y \vdash R(y) : \Prop\)
  together with an element \(\Gamma, y : Y, r : R(y) \vdash k(y, r) :
  X(y)\) which we refer to as the \emph{reductions}.
\end{itemize}
An \emph{algebra} for a cofibrant polynomial with reductions
\((Y, X, R, k)\) over \(\Gamma \in \Ctx^{\cat{E}}\) is a type
\(\Gamma \vdash W\) together with an element
\(\Gamma, y : Y, \alpha : X(y) \to W \vdash s(y, \alpha) : W\) such
that
\(\Gamma, y : Y, \alpha : X(y) \to W, r : R(y) \vdash s(y, \alpha) =
\alpha(k(y, r))\). Algebras for \((Y, X, R, k)\) form a category in
the obvious way and we say \(\cat{E}\) supports \emph{cofibrant
  \(W\)-types with reductions} if every cofibrant polynomial with
reductions has an initial algebra preserved by reindexing.

\section{Orton-Pitts Construction}
\label{sec:orton-pitts}
\begingroup
\newcommand{\E}{\mathcal{E}}
\newcommand{\id}{\mathrm{id}}

\begin{assumption}
  \label{asm:orton-pitts}
  Let \(\cat{E}\) be a model of dependent type theory that supports
  dependent product types, dependent sum types, extensional identity
  types, unit type, finite colimits, natural numbers, propositional
  truncation and a countable chain of universes. We further assume
  that every context \(\Gamma \in \Ctx^{\cat{E}}\) is isomorphic to
  \(1.X\) for some type \(X\) over the terminal object \(1\). Suppose
  the following:
  \begin{itemize}
  \item \(\cat{E}\) has a type \(1 \vdash \I\) equipped with
    two constants \(\Izero\) and \(\Ione\) and two binary operators
    \(\Imin\) and \(\Imax\);
  \item \(\cat{E}\) has types \(1 \vdash \Prop\) and
    \(\varphi : \Prop \vdash [\varphi]\) such that
    \(\varphi : \Prop, x : [\varphi], y : [\varphi] \vdash x = y\). An
    element of \(\Prop\) is called a cofibrant proposition. We often
    omit \([-]\) and regard an element \(\varphi : \Prop\) itself as a
    type;
  \item \(\I\) and \(\Prop\) satisfy
    \(\mathtt{ax_{1}}\)--\(\mathtt{ax_{9}}\) given by Orton and Pitts
    \cite{orton2018axioms};
  \item \(\Prop\) satisfies propositional extensionality:
    \(\prod_{\varphi, \psi : \Prop}(\varphi \FromTo \psi) \To (\varphi
    = \psi)\);
  \item the exponential functor
    \((-)^{\I} : \Ctx^{\cat{E}} \to \Ctx^{\cat{E}}\) has a right
    adjoint;
  \item \(\cat{E}\) supports cofibrant \(W\)-types with reductions.
  \end{itemize}
\end{assumption}

Note that the axioms in \cite{orton2018axioms} are written in the
internal language of an elementary topos, but they are easily
translated into dependent type theory with \(\I\) and \(\Prop\) as
above. We require propositional extensionality which trivially
holds when \(\Prop\) is a subobject of the subobject classifier of an
elementary topos.

Under these assumptions, we will build a model \(\cttmodel{\cat{E}}\)
of univalent type theory as follows:
\begin{itemize}
\item the base category \(\Ctx^{\cttmodel{\cat{E}}}\) is that of
  \(\cat{E}\);
\item the types \(\Gamma \vdash_{\cttmodel{\cat{E}}} X\) are
  the types \(\Gamma \vdash_{\cat{E}} X\) equipped with a
  ``fibration structure'';
\item the elements \(\Gamma \vdash_{\cttmodel{\cat{E}}} a : X\) are
  the elements \(\Gamma \vdash_{\cat{E}} a : X\) of the underlying
  type \(X\) in \(\cat{E}\);
\end{itemize}
By the construction given in \cite{orton2018axioms}, this model
\(\cttmodel{\cat{E}}\) supports dependent product types, dependent sum
types, identity types, unit type, finite coproducts and natural
numbers. For a countable chain of univalent universes, use the right
adjoint to \((-)^{\I}\) as in \cite{licata2018internal}. It remains to
show that \(\cttmodel{\cat{E}}\) supports propositional truncation,
which will be proved in Section \ref{sec:high-induct-types} using
cofibrant \(W\)-types with reductions. We call a model of univalent
type theory of the form \(\cttmodel{\cat{E}}\) an \emph{Orton-Pitts
  model}.

\subsection{Higher Inductive Types in Orton-Pitts Models}
\label{sec:high-induct-types}
\begingroup
\newcommand{\hcomp}{\mathsf{hcomp}}
\newcommand{\cone}{\mathsf{Cone}}
\newcommand{\susp}{\mathsf{Susp}}
\newcommand{\presusp}{\mathsf{Susp}_0}
\newcommand{\paste}{\mathsf{pastecone}}
\newcommand{\cof}{\Prop}
\newcommand{\lfr}{\mathsf{LFR}}
\newcommand{\incl}{\mathsf{inc}}
\newcommand{\sq}{\mathsf{sq}}

We are still working with a model \(\cat{E}\) of type theory that
satisfies Assumption \ref{asm:orton-pitts}. We will show how to
construct higher inductive types in \(\cttmodel{\cat{E}}\). Our
techniques are fairly general, although we will focus on the HITs that
we will need for the main theorem. The techniques developed by Coquand
Huber and M\"{o}rtberg in \cite{coquandhubermortberg} are already very
close to working in arbitrary Orton-Pitts models. The only exception
is that the underlying objects for the HITs are given by certain
initial algebras, which are constructed directly for cubical
sets. This definition doesn't quite work for cubical assemblies for
two reasons. Firstly we are using a different cube category, and
secondly we are working internally in assemblies. Rather than proving
the same results again for cubical assemblies we will use a more
general approach based on \(W\)-types with reductions that covers both
cases.  The first author already showed in \cite[Section
4]{swanwtypesreductions} that (non-split) \(W\)-types with locally
decidable reductions can be constructed in any category of presheaf
assemblies and we'll see later how to ensure that we get in fact
\emph{split} \(W\)-types with reductions in presheaf assemblies.

Finally, we will also make some minor adjustments related to the fact
that we do not assume the interval object has reversals.

When we construct higher inductive types, we will use formulations
based on \(\Path\) types, following Coquand, Huber and
M\"{o}rtberg. Technically these formulations can only be stated in
cubical type theory, and not in intensional type theory in
general. However, it is straightforward to derive versions based on
\(\Id\) types using the equivalence of \(\Path\) and \(\Id\) types,
which are then valid in \(\cttmodel{\cat{E}}\). We note that although
computation rules hold definitionally for both point and path
constructors for the \(\Path\) type versions, after translating to
\(\Id\) types, the definitional equality only holds for point
constructors. However, neither definitional equality will be needed
for our end result.

\begin{definition}
  Given a type \(\Gamma \vdash_{\cat{E}} A\), we define the
  \emph{local fibrant replacement of \(A\)}, \(\lfr(A)\) to be the
  \(W\)-type with reductions defined as follows.
  \begin{itemize}
  \item When \(a : A\), we add an element \(\incl(a)\) to
    \(\lfr(A)\).
  \item When \(\varphi : \cof\), \(\epsilon \in \{0, 1\}\) and
    \(u : \sum_{i : \I} ((i = \epsilon) \vee \varphi) \;\to\; \lfr(A)\), we
    add an element \(\hcomp(\varphi, \epsilon, u)\) to \(\lfr(A)\).
  \item If \(p : \varphi\) and \(\epsilon\) and \(u\) are as above
    then \(\hcomp(\varphi, \epsilon, u)\) reduces to
    \(u(1 - \epsilon, p)\).
  \end{itemize}
  Formally, we define the constructors \(Y\) to be the coproduct
  \(A + (\cof \times 2)\). We take the arity \(X(\inl(a))\) to be the
  empty type for \(a : A\) and \(X(\varphi, \epsilon)\) to be
  \(\sum_{i : \I} \varphi \vee (i = \epsilon)\) for
  \((\varphi, \epsilon) : \cof \times 2\). We take the reductions
  \(R(\inl(a))\) to be \(\bot\) for \(a : A\) and
  \(R(\varphi, \epsilon)\) to be \(\varphi\) together with the map
  \(p : \varphi \vdash (1 - \epsilon, p) : \sum_{i : \I} \varphi \vee
  (i = \epsilon)\).
\end{definition}

\begin{theorem}
  \label{thm:suspensionexist}
  The model \(\cttmodel{\cat{E}}\) supports suspensions.
\end{theorem}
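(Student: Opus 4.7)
The plan is to construct $\susp(A)$ as the initial algebra of a single cofibrant polynomial with reductions that simultaneously encodes the point and path constructors of the suspension together with the $\hcomp$ constructor used to make the result fibrant. This follows the Coquand--Huber--Mörtberg strategy for HITs, adapted to the Orton-Pitts setting via cofibrant $W$-types with reductions, in the spirit of the $\lfr(A)$ construction above.

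Concretely, I would take constructors $Y = 1 + 1 + (A \times \I) + (\cof \times 2)$ corresponding to $N$, $S$, meridian positions $(a, i)$, and $\hcomp$ positions $(\varphi, \epsilon)$, with arities $\emptyset$, $\emptyset$, $(i = 0) \vee (i = 1)$, and $\sum_{j : \I}((j = \epsilon) \vee \varphi)$ respectively. The reductions for $\hcomp$ are exactly those used for $\lfr$; for the meridian I take $R((a, i)) = (i = 0) \vee (i = 1)$ with $k$ the identity into the arity. In the initial algebra, $N$ and $S$ are the nullary constructors and I set $\mathsf{merid}(a, i) := s(\inr(\inl(a, i)), [N, S])$, where $[N, S]$ is the cofibrant case-split sending $(i = 0)$-evidence to $N$ and $(i = 1)$-evidence to $S$; this is well-defined because Orton--Pitts gives $0 \neq 1$, so the two disjuncts are disjoint. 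The algebra reduction equation then strictly identifies $\mathsf{merid}(a, 0)$ with $N$ and $\mathsf{merid}(a, 1)$ with $S$, and the $\hcomp$ constructor supplies the fibration structure exactly as for $\lfr$.

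For the elimination principle, given a fibrant family $\Gamma, x : \susp(A) \vdash P(x)$ together with methods $n : P(N)$, $s : P(S)$, and $m(a)$ a path over $\mathsf{merid}(a, -)$ from $n$ to $s$, I would equip the total space $\sum_{x : \susp(A)} P(x)$ with an algebra structure whose first projection is compatible with the base algebra: $N$ and $S$ go to $(N, n)$ and $(S, s)$; the meridian position $(a, i)$ together with $\alpha : (i = 0) \vee (i = 1) \to \sum_{x} P(x)$ goes to a pair whose first component is $s(\inr(\inl(a, i)), \pi_1 \circ \alpha)$ and whose second component is produced by an hcomp-fill in $P$ over the body $m(a, -)$ using $\pi_2 \circ \alpha$ on the cofibrant face; $\hcomp$ positions use composition in $P$. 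By initiality, the induced morphism is then a section of the first projection, yielding the dependent eliminator with the expected computation rules for $N$, $S$, and on the faces of the meridian.

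The hard part will be arranging the total-space algebra carefully so that the partial data coming from $\alpha$ on the $(i = 0) \vee (i = 1)$ face is genuinely compatible with the body data $m(a, i)$ and with the fibrancy of $P$: at the endpoints, the reduction in $\susp(A)$ identifies $s(\inr(\inl(a, 0)), \pi_1 \alpha)$ with $\pi_1 \alpha$ applied to $(i = 0)$-evidence, so $\pi_2 \alpha$ lands in the correct fibre; while for $i \notin \{0, 1\}$ the arity is empty and the meridian position reduces to $\mathsf{merid}(a, i)$ unambiguously, so $m(a, i)$ is directly available. The fibrancy of $P$ provides the necessary hcomp-fillers, and coherence of all reductions across the polynomial then requires careful but standard bookkeeping before the usual argument from $W$-types with reductions produces the eliminator and stability under reindexing.
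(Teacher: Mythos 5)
There is a genuine gap in the way you encode the meridian. In a \(W\)-type with reductions every constructor position comes equipped with an \emph{arbitrary} attaching map on its arity: your polynomial therefore produces an element \(\mathrm{sup}((a,i),\alpha)\) for \emph{every} \(\alpha : (i=0)\vee(i=1) \to W\), not just for \(\alpha = [N,S]\). Reductions cannot pin the endpoints to the externally chosen points \(N\) and \(S\), since a reduction can only equate \(\mathrm{sup}(y,\alpha)\) with a value \(\alpha(k(y,r))\) of the attached map. So the initial algebra you build is not the suspension: for each \(a : A\) it freely adds a path between every pair of its own elements (reversed meridians, meridians between \(\mathsf{hcomp}\) elements, etc.), and defining \(\mathrm{merid}(a,i) := \mathrm{sup}((a,i),[N,S])\) merely selects one element among this junk. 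The eliminator then fails to typecheck: your method \(m(a,i)\) lives in \(P(\mathrm{sup}((a,i),[N,S]))\), whereas the total-space algebra must supply an element of \(P(\mathrm{sup}((a,i),\pi_1\circ\tilde\alpha))\) for an arbitrary \(\tilde\alpha\), and these fibres are not connected by any available path, so no \(\mathsf{hcomp}\)-correction applies. The remark that ``for \(i\notin\{0,1\}\) the arity is empty and the position reduces to \(\mathrm{merid}(a,i)\)'' is also not internally valid: \((i=0)\vee(i=1)\) is not decidable over the interval, and \(\mathrm{sup}((a,i),\alpha)\) genuinely depends on \(\alpha\) even when no reduction fires.

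The paper sidesteps exactly this problem by separating the point-level identifications from the fibrant structure: it first forms the na\"ive suspension as a \emph{strict pushout} of \(X\times 2 \to X\times\I\) and \(X\times 2 \to 2\) in \(\cat{E}\), so that the meridian is indexed plainly by \(X\times\I\) with its endpoints identified with the two poles by the colimit (no spurious attaching-map argument), and only then applies the local fibrant replacement, whose sole \(W\)-type-with-reductions constructor is \(\mathsf{hcomp}\) — where the function argument \emph{is} part of the intended data. The result is an initial suspension algebra in the sense of Coquand--Huber--M\"ortberg, and their proof is then cited for the fibration structure and the eliminator. A single-polynomial encoding with a function argument on the path constructor is appropriate only for HITs whose constructors genuinely take such an argument, as with \(\Kop_B\) later in the paper; to repair your argument you should either pass through a strict pushout as the paper does, or otherwise quotient away the spurious attaching maps, which the \(W\)-type machinery alone does not do.
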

\begin{proof}
  Suppose we are given a type
  \(\Gamma \vdash_{\cttmodel{\cat{E}}} X\). We first construct the
  na\"{i}ve suspension, \(\presusp(X)\) as the pushout below.
  \begin{equation*}
    \begin{tikzcd}
      X \times 2 \arrow[r] \arrow[d] & 2 \arrow[d] \\
      X \times \I \arrow[r] & \ar[ul, phantom, "\ulcorner", very near
      start]
      \presusp(X)
    \end{tikzcd}
  \end{equation*}

  We next take the local fibrant replacement, to get
  \(\lfr(\presusp(X))\). This is then an initial \(\susp(X)\) algebra,
  as defined by Coquand, Huber and M\"{o}rtberg in \cite[Section
  2.2]{coquandhubermortberg} and so we can then proceed with the same
  proof as they do there.
\end{proof}

\begin{theorem}
  The model \(\cttmodel{\cat{E}}\) supports propositional truncation.
\end{theorem}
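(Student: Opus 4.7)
The plan is to adapt the proof of Theorem \ref{thm:suspensionexist}: first construct a naive pre-truncation carrying just the point and path constructors, then apply the local fibrant replacement $\lfr$ to promote it to a fibrant initial algebra. The key difference from the suspension case is that the path constructor for propositional truncation is self-referential — it asks for a path between every pair of elements of the truncation itself — so the naive pre-truncation cannot be presented as a pushout indexed by an external type.

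I would therefore build the naive pre-truncation $\|X\|_0$ directly as a cofibrant $W$-type with reductions, using the hypothesis in Assumption \ref{asm:orton-pitts}. Formally, take constructors $Y = X + \I$. For $\inl(x)$, which we read as the $\incl$ constructor, the arity is empty with no reductions. For $\inr(i)$, the interval-indexed $\sq$ constructor, the arity is $\Bool$ (encoding the two endpoints), the reduction is $R(\inr(i)) = (i = \Izero) \vee (i = \Ione)$, and $k$ sends a proof of $i = \epsilon$ to $\epsilon : \Bool$. An algebra then supplies, for any $\alpha : \Bool \to W$ and any $i : \I$, a point that reduces to $\alpha(\epsilon)$ at $i = \epsilon$ — precisely a squash path from $\alpha(\Izero)$ to $\alpha(\Ione)$ parametrised by $i$.

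Having built $\|X\|_0$, I would set $\|X\| = \lfr(\|X\|_0)$ to supply the $\hcomp$ operations required for fibrancy. The resulting object should then be an initial propositional truncation algebra in the sense of Coquand, Huber and M\"{o}rtberg \cite[Section 2.3]{coquandhubermortberg}, and the usual eliminator and recursor are extracted exactly as they do. As in Theorem \ref{thm:suspensionexist}, the final step is to translate the $\Path$-type formulation back into $\Id$-types, giving propositional truncation in $\cttmodel{\cat{E}}$.

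The main obstacle will be the universal property: that any function from $X$ into a fibrant hproposition $P$ in $\cttmodel{\cat{E}}$ extends uniquely through $\|X\|$. Initiality of $\|X\|_0$ handles the $\incl$ and $\sq$ cases, since any hproposition trivially admits a $\sq$-algebra structure; initiality of $\lfr$ then lifts the extension through the $\hcomp$ operations when $P$ is fibrant. The delicate point is checking that squash paths and $\hcomp$-filled cells cohere so that these two initiality principles glue into one. This is precisely the coherence question the CHM arguments settle, and our $W$-types-with-reductions presentation is arranged so that their proofs carry over uniformly in any Orton-Pitts model.
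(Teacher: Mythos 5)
Your encoding of the squash constructor as a polynomial with reductions (constructors \(\I\), arity \(\Bool\), reduction \((i=\Izero)\vee(i=\Ione)\) with \(k\) picking out the corresponding endpoint) is exactly the one the paper uses. The genuine gap is in the staging. By first forming a pre-truncation \(\trunc{X}_{0}\) with only \(\mathsf{inc}\) and \(\mathsf{sq}\), and only afterwards applying the local fibrant replacement, your \(\mathsf{sq}\)-cells connect only elements of \(\trunc{X}_{0}\); the elements freely added afterwards (the \(\mathsf{hcomp}\)-cells over arbitrary cofibrations) receive no squash paths at all. Consequently the object you build is not, as constructed, an algebra for the propositional truncation signature of Coquand, Huber and M\"{o}rtberg: in \cite[Section 3.3.4]{coquandhubermortberg} the squash constructor ranges over \emph{all} elements of the truncation, including \(\mathsf{hcomp}\)-cells, and both their proof that the truncation is a proposition and the squash case of their eliminator use this. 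So the step ``the resulting object should then be an initial propositional truncation algebra in the sense of CHM'' is exactly where the argument breaks, and the coherence question you defer to CHM is not one their arguments address, since their construction never separates the squash cells from the composition cells. This is also why the suspension proof you are adapting does not transfer: there the path constructor is indexed by external data, so naive-object-then-replacement works; for the truncation the path constructor quantifies over the type being defined (as you yourself note) and must be added \emph{simultaneously} with \(\mathsf{hcomp}\).

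The paper's construction does precisely that: it takes a single \(W\)-type with reductions given by the coproduct of the local-fibrant-replacement polynomial and your squash polynomial, so that \(\mathsf{sq}(x,y,i)\) exists for all \(x,y\) in the truncation; then the hproposition witness is literally \(\lambda x.\lambda y.\lambda i.\,\mathsf{sq}(x,y,i)\) and the rest is CHM's Section 3.3.4 verbatim. Your two-stage object could conceivably be repaired, but only with an argument you have not given and that CHM do not supply: you would have to prove separately that it is a proposition, e.g.\ by a double elimination producing a path between arbitrary elements (connecting each \(\mathsf{hcomp}\)-cell to its base by a filler, derivable from \(\mathsf{hcomp}\) and the connections, and connecting images of \(\trunc{X}_{0}\) by \(\mathsf{sq}\)), checking in each \(\mathsf{hcomp}\) case that the strict reduction equations demanded by the eliminator hold. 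Note also that your ``main obstacle'' paragraph only discusses extending maps into fibrant hpropositions; it never addresses the hproposition-ness of \(\trunc{X}\) itself, which is the property that actually distinguishes the truncation from the local fibrant replacement and is the part your construction makes hard.
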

\begin{proof}
  Suppose we are given a type \(\Gamma
  \vdash_{\cttmodel{\cat{E}}}A\). We first define the underlying object
  of \(\| A \|\) to be the \(W\)-type with reductions defined as
  follows.
  \begin{itemize}
  \item When \(a : A\), we add an element \(\incl(a)\) to
    \(\| A \|\).
  \item When \(\varphi : \cof\), \(\epsilon \in \{0, 1\}\) and
    \(u : \sum_{i : \I} ((i = \epsilon) \vee \varphi) \;\to\; \|A \| \), we
    add an element \(\hcomp(\varphi, \epsilon, u)\) to \(\| A \|\).
  \item If \(p : \varphi\) and \(\epsilon\) and \(u\) are as above then
    \(\hcomp(\varphi, \epsilon, u)\) reduces to \(u(1-\epsilon, p)\).
  \item If \(x, y : \| A \|\) and \(i : \I\), then \(\| A \|\)
    contains an element of the form \(\sq(x, y, i)\).
  \item If \(x, y, i\) are as above and \(i = 0\), then \(\sq(x, y,
    i)\) reduces to \(x\).
  \item If \(x, y, i\) are as above and \(i = 1\), then \(\sq(x, y,
    i)\) reduces to \(y\).
  \end{itemize}
  Formally, we define this by taking the coproduct of two polynomials
  with reductions. The first is the one we used before for
  \(\lfr\). The second has constructors \(Y := \I\), with the arity
  defined by \(X(i) := 2\), and reductions \(R(i) := (i = 0) \lor (i =
  1)\) together with the map \(p : (i = 0) \lor (i = 1) \vdash k(p) :
  2\) defined by \(k(p) = 0\) if \(p : i = 0\) and \(k(p) = 1\) if \(p
  : i = 1\).

  The remainder of the proof is the same as the syntactic description
  of propositional truncation by Coquand, Huber
  and M\"{o}rtberg in \cite[Section 3.3.4]{coquandhubermortberg}.
\end{proof}

We now construct a new higher inductive type, which is a simplified
version of the higher inductive type \(\Jop_F\) defined by Rijke,
Shulman and Spitters in \cite[Section 2.2]{rijkeshulmanspitters}.
Given families of types \(\Gamma \vdash_{\cttmodel{\cat{E}}} A\) and
\(\Gamma, a : A \vdash_{\cttmodel{\cat{E}}} B(a)\) we will construct a
higher inductive type \(\Kop^\Gamma_B\) defined as follows.
\begin{itemize}
\item When \(a : A\) and \(f : B(a) \to \Kop_B\), we add an element
  \(\ext(a, f)\) to \(\Kop_B\).
\item When \(a : A\), \(f : B(a) \to \Kop_B\) and \(b : B(a)\) we add
  an element \(\isext(a, f, b)\) to \(\Path(\ext(a, f), f(b))\).
\end{itemize}
We require that \(\Kop_B\) satisfies the following elimination rule.
Suppose we are given a family of types
\(\Gamma, x : \Kop_B \vdash_{\cttmodel{\cat{E}}} P(x)\) together with
the terms below.
\begin{align*}
  R &: \prod_{a : A} \prod_{f : B(a) \to \Kop_B} \left(\prod_{b : B(a)}
      P(f(b))\right) \to P(\ext(f, c)) \\
  S &: \prod_{a : A} \prod_{f : B(a) \to \Kop_B} \,\prod_{f' : \prod_{b :
      B(a)} P(f(b))} \,\prod_{b : B(a)} \prod_{i : \I} P(\isext(a, f, b)(i))
\end{align*}
Suppose further that \(S\) satisfies the equalities
\begin{align*}
  S(a, f, f', b, 0) &= R(f') \\
  S(a, f, f', b, 1) &= f'(b)
\end{align*}
Then we have a choice of term
\(\Gamma, x : \Kop_B \vdash s(x) : P(x)\) satisfying the following
computation rules for \(a : A\), \(f : B(a) \to \Kop_B\) and \(b :
B(a)\).
\begin{align*}
  s(\ext(a, f)) &= R(a, f, s \circ f) \\
  s(\isext(a, f, b)(i)) &= S(a, f, s \circ f, b, i)
\end{align*}
Moreover the choice of term is strictly preserved by reindexing.

We use the techniques developed by Coquand, Huber and M\"ortberg
together with \(W\)-types with reductions for constructing the
actual objects. In order to give \(\Kop^\Gamma_B\) the structure of a
fibration we need to define a composition operator. We will do this by
freely adding an \(\hcomp\) operator, and then combining it with a
transport operator, which we will explicitly define.

\begin{definition}
  Let \(\Gamma \vdash_{\cat{E}} X\) be a type. We define the
  \emph{na\"ive cone}, \(\cone(X)\) to be the following
  pushout\footnote{Pushout in the usual categorical sense, not the
    homotopy pushout.}.
  \begin{equation*}
    \begin{tikzcd}
      X \arrow[r] \arrow[d, "{\langle 1_X, \delta_0 \rangle}"] & 1
      \arrow[d, "\inl"] \\
      X \times \I \arrow[r, "\inr"] & \cone(X)
    \end{tikzcd}
  \end{equation*}
\end{definition}

We can now define \(\Kop^\Gamma_B\) to be the following \(W\)-type
with reductions.
\begin{itemize}
\item When \(a : A\), \(c : \cone(B(a))\) and \(f : B(a) \to \Kop_B\),
  we add an element \(\paste(a, c, f)\) to \(\Kop_B\).
\item If \(a, c, f\) are as above and \(c\) is of the form
  \(\inr(b, 1)\) for \(b : B(a)\), then \(\paste(a, c, f)\) reduces to
  \(f(b)\).
\item When \(\varphi : \cof\) and
  \(u : \sum_{i : \I} ((i = 0) \vee \varphi) \;\to\; \Kop_B\), we
  add an element \(\hcomp(\varphi, u)\) to \(\Kop_B\).
\item If \(p : \varphi\) and \(u\) is as above then
  \(\hcomp(\varphi, u)\) reduces to \(u(1, p)\).
\end{itemize}

To check that this really is a \(W\)-type with reductions, we
need to define the polynomial with reductions. We take it to be
the coproduct of the following two polynomials with reductions.

We define the first component of the coproduct as follows. We take the
constructors \(Y\) to be \(\sum_{a : A} \cone(B(a))\) and the
arities \(X(a, c)\) to be \(B(a)\). We take the reductions
\(R(a, \inl(\ast))\) to be \(\bot\) and \(R(a, \inr(b, i))\) to be
\((i = 1)\) together with the map
\(R(a, \inr(b, i)) \vdash b : B(a)\). Note that
\(R : \left(\sum_{a : A}\cone(B(a))\right) \to \Prop\) is
well-defined because we have \((0 = 1) = \bot\) by propositional
extensionality.

The second component in the coproduct is the polynomial with
reductions that we used for local fibrant replacement.

\begin{lemma}
  \label{lem:koptransport}
  We construct a transport operator for \(\Kop^\Gamma_B\), in the
  sense defined in \cite[Definition 2.3]{coquandhubermortberg}.
\end{lemma}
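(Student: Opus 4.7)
\emph{Plan.} The plan is to define the transport operator by recursion on the initial algebra structure of the $W$-type with reductions underlying $\Kop^\Gamma_B$. Suppose we have a family $\Gamma, i : \I \vdash_{\cttmodel{\cat{E}}} \Kop_B$ coming from interval-indexed fibrations $A(i)$ and $B(i, a)$, both of which already carry fibration structures in $\cttmodel{\cat{E}}$ and so have their own transport operators. The aim is to produce, from $x : \Kop_B|_{i=0}$ together with a cofibrant constancy witness $\varphi$, an element of $\Kop_B|_{i=1}$ that reduces to $x$ when $\varphi$ holds, as in \cite[Definition 2.3]{coquandhubermortberg}.

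The first step is to specify the operator on each constructor. For $\paste(a, c, f)$ with $a : A(0)$, $c : \cone(B(0, a))$ and $f : B(0, a) \to \Kop_B|_{i=0}$, I set $a' := \mathrm{transp}^{A}(a) : A(1)$; transport $c$ through $\cone(B(-, a))$ using the transport for $B$ together with the fact that $\cone$ is built from finite colimits which are preserved by the Orton--Pitts construction; and define $f' : B(1, a') \to \Kop_B|_{i=1}$ by feeding $b' : B(1, a')$ through a filler for $B$ between $a$ and $a'$ to obtain a compatible $b : B(0, a)$, applying $f$, and then recursively applying the transport currently under construction. Setting $t(\paste(a, c, f)) := \paste(a', c', f')$ and $t(\hcomp(\varphi', u)) := \hcomp(\varphi', t \circ u)$, which is well-typed because the cofibrant proposition $\varphi'$ is independent of $i$, provides the required algebra structure.

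Next I would verify that this assignment respects the reductions so that the initial algebra universal property really produces $t$. The reduction $\paste(a, \inr(b, 1), f) \equiv f(b)$ is preserved because $\cone$-transport sends cone tips to cone tips (the endpoint $1 \in \I$ is strictly preserved), and by construction $f'$ at the corresponding point agrees with the recursive transport of $f(b)$. The reduction for $\hcomp$ is preserved by definition. Strict stability under reindexing then follows from the split formulation of $W$-types with reductions given in Section \ref{sec:w-types-with-1}.

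The main obstacle is the strict constancy clause: when $\varphi$ holds the transported element must equal the input $x$ definitionally, but the recursive definition above in general gives this only up to a path. I expect to close this gap by post-composing the naive recursive map with a single application of the freely added $\hcomp$ on $\Kop_B$, using $\varphi$ as the cofibration to interpolate between the transported value and the input, exactly as Coquand, Huber and M\"ortberg do for the HITs they treat in \cite{coquandhubermortberg}. A secondary difficulty is the absence of interval reversals, which prevents a literal reverse transport of $b'$ back to $B(0, a)$ in the construction of $f'$; we instead rely on the fillers guaranteed by the Orton--Pitts axioms for $\I$ and $\Prop$.
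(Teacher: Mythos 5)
Your overall strategy---define \(t\) by giving an algebra structure on the \(W\)-type with reductions, transport \(a\) forward along \(A\), use a backward map on \(B\) to build \(f'\), and let \(t\) preserve the freely added \(\mathsf{hcomp}\)---is the same as the paper's up to the one point where the proof is actually delicate, and there your argument breaks. The backward map \(B(\gamma(1), t_A(a)) \to B(\gamma(0), a)\) can only be a homotopy inverse \(t_B^{-1}\) of the transport \(t_B\) (there is no strict inverse, and no interval reversals), so for \(c = \inr(b,1)\) your candidate \(\mathsf{paste}(t_A(a), \inr(t_B(b),1), t \circ f \circ t_B^{-1})\) reduces to \(t(f(t_B^{-1}(t_B(b))))\), which is only path-equal, not strictly equal, to the required \(t(f(b))\). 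Your claim that ``by construction \(f'\) at the corresponding point agrees with the recursive transport of \(f(b)\)'' is therefore false at the level of strict equality, and strict equality is what is needed: \(t\) only comes into existence via initiality of the \(W\)-type with reductions, so the algebra structure must respect the reduction equations \emph{before} any map exists. For the same reason your proposed repair---first build a ``naive recursive map'' and then post-compose with an \(\mathsf{hcomp}\) over \(\varphi\)---cannot work: there is no naive map to post-compose with, and \(\varphi\) alone is the wrong cofibration for the equation that actually fails (which is at \(i = 1\), not on \(\varphi\)).

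The paper's proof puts the correction \emph{inside} the algebra structure: \(t(\mathsf{paste}(a, \inr(b,i), f))\) is itself defined as an \(\mathsf{hcomp}\) over \(\psi = \varphi \vee (i=0) \vee (i=1)\), whose system on \(i=1\) follows a chosen path \(j \mapsto t(f(p(a,b,j)))\) from \(t(f(t_B^{-1}(t_B(b))))\) to \(t(f(b))\) (with \(p\) constant when \(\varphi\) holds), on \(i=0\) takes the value \(\mathsf{paste}(t_A(a), \inl(\ast), t \circ f \circ t_B^{-1})\) so that the pushout identification of \(\inr(b,0)\) with \(\inl(\ast)\) is respected, and on \(\varphi\) takes the original element so that constancy holds strictly. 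The reduction rule of the \(\mathsf{hcomp}\) constructor then yields all three strict equations at once. Note also that the constancy clause, which you single out as the main obstacle, is in fact the easy part: once \(t_A\), \(t_B\), \(t_B^{-1}\) and \(p\) are chosen to be the identity (resp.\ constant) on \(\varphi\), constancy follows by the recursion itself; the missing ingredient in your proposal is the enlarged cofibration \(\varphi \vee (i=0) \vee (i=1)\) together with the correcting path \(p\).
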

\begin{proof}
  Suppose we are given \(\varphi : \cof\) and a path \(\gamma\) in
  \(\Gamma\) which is constant on \(\varphi\). We need to define a
  transport operator, which is a map
  \(t : \Kop_{B(\gamma(0))} \to \Kop_{B(\gamma(1))}\) such that \(t\)
  is the identity when \(\varphi\) is true. Formally this map can be
  defined by giving an appropriate algebra structure on
  \(\Kop_{B(\gamma(1))}\) and then using the initiality of
  \(\Kop_{B(\gamma(0))}\). However, for clarity we will present the
  proof as an argument by higher recursion on the definition of
  \(\Kop_{B(\gamma(0))}\).

  We need to show how to define
  \(t(\paste(a, c, f))\) and \(t(\hcomp(\psi, u))\), and then check
  that the definition respects the reduction equations. For the latter
  we define the transport operator so that it preserves the
  \(\hcomp\) structure, which determines it uniquely, following
  \cite{coquandhubermortberg}. For the
  former, we recall that \(\cone(B(a))\) was defined as a pushout, and
  so we can split into a further two cases. Either \(c\) is of the
  form \(\inl(\ast)\), or it is of the form \(\inr(b, i)\) where \(b :
  B(a)\) and \(i : \I\). Now in addition to the reduction equation, we
  have to also satisfy \(t(\inl(\ast)) = t(\inr(b, 0))\) in order to
  eliminate out of the pushout.

  Write \(t_A\) for the transport \(A(\gamma(0)) \to A(\gamma(1))\)
  and \(t_B\) for the transport
  \(\prod_{a : A(\Gamma(0))} B(a) \to B(t_A(a)) \) ensuring that
  \(t_A(a) = a\) and \(t_B(b) = b\) when \(\varphi = \top\), for all
  \(a : A(\gamma(0))\) and \(b : B(a)\). Write
  \(t_B^{-1}\) for the homotopy inverse
  \(\prod_{a : A(\Gamma(0))} B(t_A(a)) \to B(a)\), again ensuring
  that \(t_B^{-1}(b) = b\) when \(\varphi = \top\). Since we are only
  guaranteed the existence of a homotopy inverse, not a strict
  inverse, we don't necessarily have
  \(t_B^{-1} \circ t_B = 1_{B(a)}\). We can however construct paths
  \(p : \prod_{a : A(\Gamma(0))} \prod_{b : B(a)} \I \to B(a)\) satisfying
  for all \(a : A(\Gamma(0))\) and \(b : B(a)\) that
  \(p(a, b, 0) = t_B^{-1}(t_B(b)) \) and \(p(a, b, 1) =
  b\). Furthermore, we may assume that for any \(a, b\) and \(i\), if
  \(\varphi = \top\) then \(p(a, b, i) = b\).

  We define \(t(\paste(a, \inl(\ast), f))\) to be of the form
  \(\paste(t_A(a), \inl(\ast), f')\), where we still need to define a
  function \(f' : B(t_A(a)) \to \Kop_{B(\gamma(1))}\). Note that we
  may assume by recursion that for each \(b : B(a)\), \(t(f(b))\) has
  already been defined and belongs to \(\Kop_{B(\gamma(1))}\). Hence
  we can simply define \(f'\) to be \(t \circ f \circ t_B^{-1}\).

  The obvious first attempt at defining
  \(t(\paste(a, \inr(b, i), f))\), would be
  \(\paste(t_A(a), \inr(t_B(b), i), t \circ f \circ t_B^{-1})\). Note
  however that this does not satisfy the reduction equations. This is
  because when \(i = 1\), \(\paste(a, \inr(b, i), f)\) reduces to
  \(f(b)\) and
  \(\paste(t_A(a), \inr(t_B(b), i), t \circ f \circ t_B^{-1})\)
  reduces to \(t(f(t_B^{-1}(t_B(b))))\) which is not necessarily
  strictly equal to \(t(f(b))\). We fix this using the \(\hcomp\)
  constructor, following the construction of homotopy pushouts in
  \cite[Section 2.3]{coquandhubermortberg}. We define \(\psi : \cof\)
  to be \(\varphi \vee (i = 0) \vee (i = 1)\). We then define
  \(u : \sum_{j : \I} (\psi \vee (j = 0)) \to
  \Kop_{B(\gamma(a))}\) as follows.
  \begin{equation*}
    u(j, \ast) :\equiv
    \begin{cases}      
      \paste(t_A(a), \inr(t_B(b), i), t \circ f \circ t_B^{-1})
      & j = 0 \\
      \paste(a, \inr(b, i), t \circ f) & \varphi = \top \\
      \paste(t_A(a), \inl(\ast), t \circ f \circ t_B^{-1}) & i = 0 \\
      t(f(p(a, b, j))) & i = 1
    \end{cases}
  \end{equation*}
  We then define \(t(\paste(a, \inr(b, i), f))\) to be
  \(\hcomp(\psi, 0, u)\). The reduction equation for \(\hcomp\) then
  ensures that we do satisfy the reduction equation for \(\paste\) and
  also retain the necessary equations for the pushout and furthermore
  ensures that the resulting map \(t : \Kop_{B(\gamma(0))} \to
  \Kop_{B(\gamma(1))}\) is a transport operator.
\end{proof}

\begin{theorem}
  We construct a fibration structure for each \(\Kop_B\), which is
  strictly preserved by reindexing.
\end{theorem}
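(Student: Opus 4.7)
The plan is to combine the transport operator constructed in Lemma~\ref{lem:koptransport} with the $\hcomp$ operator that was already built into $\Kop_B$ via the $W$-type with reductions definition. Following Coquand, Huber and M\"ortberg \cite[Section 2.3]{coquandhubermortberg}, a fibration structure (i.e.\ a composition operator) for a type family can be decomposed as the combination of a transport operator along paths in the base with a homogeneous composition operator $\hcomp$ on each fibre, so our work is essentially to check that these two ingredients glue correctly.

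First, I would verify that the $\hcomp$ constructor we included in the definition of $\Kop_B$ really does supply a homogeneous composition operator in the required sense. Given $\varphi : \cof$ and $u : \sum_{i : \I}((i = 0) \vee \varphi) \to \Kop_B$, the constructor produces $\hcomp(\varphi, u) : \Kop_B$, and the reduction rule $\hcomp(\varphi, u) = u(1, p)$ on $\varphi$ supplies the partial-section matching condition; the lid at $i = 0$ is encoded by the shape of the domain of $u$, since the second summand in the coproduct of polynomials defining $\Kop_B$ was chosen for precisely this purpose.

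Second, I would combine the $\hcomp$ with the transport operator of Lemma~\ref{lem:koptransport} to obtain the full composition operator. Given a composition problem consisting of a path $\gamma$ in $\Gamma$, a cofibrant proposition $\varphi$, and a partial section with endpoint in $\Kop_{B(\gamma(0))}$, one first uses transport along $\gamma$ (restricted appropriately) to reduce to a composition problem inside the single fibre $\Kop_{B(\gamma(1))}$, then closes it up using $\hcomp$. Since we do not assume that $\I$ has reversals, some care is needed to track endpoints and to match boundary conditions, but the connections $\Imin$ and $\Imax$ together with further applications of $\hcomp$ suffice to perform the glueing, following the same pattern used inside the proof of Lemma~\ref{lem:koptransport} to patch up the boundary of $\paste$.

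Finally, strict stability under reindexing follows from the splitness of the underlying $W$-type with reductions provided by Assumption~\ref{asm:orton-pitts}, together with the observation that both the transport operator of Lemma~\ref{lem:koptransport} and the combination above were defined uniformly in the ambient context $\Gamma$ and only in terms of operations that are themselves strictly stable. The main obstacle I expect is the coherence bookkeeping in the second step: without reversals, arranging the auxiliary $\hcomp$s so that every required reduction equation holds on the nose, rather than merely up to a path, is the delicate part, and mirrors the careful endpoint management already seen in the definition of $t(\paste(a, \inr(b, i), f))$ in Lemma~\ref{lem:koptransport}.
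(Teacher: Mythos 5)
Your proposal is correct and takes essentially the same route as the paper: the paper's proof is a one-line appeal to Lemma~\ref{lem:koptransport} together with \cite[Lemma 2.5]{coquandhubermortberg}, which is precisely the decomposition of a composition structure into transport plus the freely added $\hcomp$ that you spell out (including the strict reindexing coming from the split $W$-types with reductions).
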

\begin{proof}
  By lemma \ref{lem:koptransport} and \cite[Lemma
  2.5]{coquandhubermortberg}.
\end{proof}

\begin{lemma}
  We construct terms \(\ext\) and \(\isext\) for \(\Kop_B\) that
  satisfy the appropriate equations.
\end{lemma}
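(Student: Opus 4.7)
The plan is to read the constructors $\ext$ and $\isext$ directly off the $W$-type with reductions used to define $\Kop_B$, exploiting both the pushout structure of $\cone(B(a))$ and the reduction clause of $\paste$. Concretely, I would set
\[
  \ext(a, f) \;:\equiv\; \paste(a, \inl(\ast), f),
  \qquad
  \isext(a, f, b) \;:\equiv\; \lambda i.\, \paste(a, \inr(b, i), f).
\]
Both are well-typed since $\paste(a, c, f) : \Kop_B$ for every $c : \cone(B(a))$, and the second term plainly takes values in $\I \to \Kop_B$, hence yields a candidate $\Path$-term.

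Next I would verify that $\isext(a, f, b)$ really has the requested endpoints. At $i = 1$ the element $\paste(a, \inr(b, 1), f)$ reduces strictly to $f(b)$ by the reduction clause in the defining polynomial with reductions of $\Kop_B$ (this is why we needed propositional extensionality to make $R(a, \inr(b,i)) = (i=1)$ well defined and to rule out $(0 = 1)$). At $i = 0$, the pushout relation defining $\cone(B(a))$ gives $\inr(b, 0) = \inl(\ast)$ in $\cone(B(a))$, so
\[
  \paste(a, \inr(b, 0), f) \;=\; \paste(a, \inl(\ast), f) \;=\; \ext(a, f).
\]
Hence $\isext(a, f, b)$ is an element of $\Path(\ext(a, f), f(b))$ on the nose.

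Stability under reindexing is inherited from the fact that both the cone pushout and the initial algebra for the polynomial with reductions are preserved by reindexing (the former by our assumption of finite colimits stable under pullback inside the Orton--Pitts setting, the latter by our assumption that cofibrant $W$-types with reductions are split). Consequently the maps $\inl$, $\inr$, $\paste$ commute strictly with substitution, and so do $\ext$ and $\isext$.

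The only step with any subtlety is checking the endpoint at $i = 0$: one must know that the pushout defining $\cone(B(a))$ is genuinely computed in $\cat{E}$ (not merely up to propositional equality) so that $\inr(b, 0) = \inl(\ast)$ holds strictly, and that $\paste$ respects this identification in its second argument. Both points are immediate from our formulation: $\cone$ is a strict categorical pushout, and $\paste$ is the constructor of a $W$-type with reductions whose constructors are indexed by $\sum_{a:A}\cone(B(a))$, so equal indices give strictly equal constructor outputs.
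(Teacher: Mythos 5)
Your definitions are exactly the ones used in the paper, namely \(\ext(a,f) :\equiv \paste(a,\inl(\ast),f)\) and \(\isext(a,f,b)(i) :\equiv \paste(a,\inr(b,i),f)\), and your endpoint checks (the strict pushout identification \(\inr(b,0)=\inl(\ast)\) in \(\cone(B(a))\) at \(i=0\), the reduction to \(f(b)\) at \(i=1\)) are precisely the reasons these terms satisfy the required equations, which the paper leaves implicit. So the proposal is correct and takes essentially the same approach, just with the verification spelled out.
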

\begin{proof}
  \begin{align*}
    \ext(a, f) &:\equiv \paste(a, \inl(\ast), f) \\
    \isext(a, f, b)(i) &:\equiv \paste(a, \inr(b, i), f)
  \end{align*}
\end{proof}

\begin{lemma}
  \(\Kop_B\) satisfies the necessary induction principle.
\end{lemma}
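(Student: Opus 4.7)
The plan is the standard total-space trick. Since $\Kop_B$ was constructed as the initial algebra for a specific cofibrant polynomial with reductions (the coproduct of the $\paste$ polynomial over $\sum_{a : A} \cone(B(a))$ and the one-sided $\hcomp$ polynomial inherited from $\lfr$), to produce the required section $s : \prod_{x : \Kop_B} P(x)$ it suffices to equip the total space $\sum_{x : \Kop_B} P(x)$ with an algebra structure over the same polynomial such that the first projection $\pi_1$ becomes an algebra homomorphism. Initiality then delivers a unique algebra morphism $\sigma : \Kop_B \to \sum_{x} P(x)$, and $\pi_1 \circ \sigma$ is an algebra endomorphism of $\Kop_B$, hence equal to the identity by uniqueness, forcing $\sigma = \langle \mathrm{id}, s \rangle$ for the desired $s$.

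Constructing the algebra structure on the $\paste$-summand reduces, by the pushout description of $\cone(B(a))$, to specifying values at the apex $\inl(\ast)$ and at the generic cone point $\inr(b, i)$ which agree at $i = 0$. Given $a : A$ and a lifted function $f : B(a) \to \sum_{x} P(x)$ with components $f_{1}, f_{2}$, I use $R(a, f_{1}, f_{2})$ at the apex, which lies over $\ext(a, f_{1}) = \paste(a, \inl(\ast), f_{1})$, and $S(a, f_{1}, f_{2}, b, i)$ at $\inr(b, i)$, which lies over $\isext(a, f_{1}, b)(i) = \paste(a, \inr(b, i), f_{1})$. The pushout compatibility at $i = 0$ is exactly the hypothesis $S(a, f_{1}, f_{2}, b, 0) = R(a, f_{1}, f_{2})$, and the reduction equation of the polynomial at $i = 1$ (that $\paste(a, \inr(b, 1), f)$ reduces to $f(b)$) corresponds to the hypothesis $S(a, f_{1}, f_{2}, b, 1) = f_{2}(b)$. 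Both are supplied by the assumed boundary conditions on $S$, so the algebra structure is well-defined and $\pi_{1}$ commutes with the $\paste$ operation by construction.

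For the $\hcomp$-summand, given $\varphi : \cof$ and $u : \sum_{i : \I}((i = 0) \vee \varphi) \to \sum_{x} P(x)$ with underlying partial element $u_{1} := \pi_{1} \circ u$, I must produce an element of $P(\hcomp(\varphi, u_{1}))$ that reduces to $\pi_{2}(u(1, p))$ whenever $p : \varphi$. This is a one-sided Kan composition along the universal extension path and is available because $P$ is a fibration in $\cttmodel{\cat{E}}$; strict reduction when $\varphi$ holds follows from the standard properties of the fibration composition exploited in \cite[Lemma 2.5]{coquandhubermortberg}. The first projection of this composite is definitionally $\hcomp(\varphi, u_{1})$, so $\pi_{1}$ is again an algebra homomorphism.

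With the algebra structure in place, initiality yields the section $s$; the computation rules $s(\ext(a, f)) = R(a, f, s \circ f)$ and $s(\isext(a, f, b)(i)) = S(a, f, s \circ f, b, i)$ can be read off directly from the $\paste$-summand construction, and stability under reindexing is inherited from the fact that initial algebras for cofibrant $W$-types with reductions are preserved by reindexing (part of Assumption \ref{asm:orton-pitts}). I expect the main technical obstacle to be arranging the $\hcomp$ lift so that its reduction equation holds strictly on the nose without appealing to any equality on $\Kop_B$ that is only propositional; the $\paste$ side is purely combinatorial once the pushout elimination principle is in hand.
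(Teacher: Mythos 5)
Your proposal is correct and takes essentially the same approach as the paper: the paper defines $s$ by higher recursion on the construction of $\Kop_B$ (which, as it notes in the proof of the transport lemma, is formally the algebra-structure-plus-initiality argument you spell out via the total space), with the identical case split --- $R$ at the apex $\inl(\ast)$, $S$ at $\inr(b,i)$, the pushout compatibility at $i=0$ and the reduction at $i=1$ supplied by the boundary hypotheses on $S$, and the $\mathsf{hcomp}$ case handled by the fibration structure on $P$. The only difference is presentational, namely that you make the total-space algebra and the uniqueness argument explicit where the paper leaves them implicit.
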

\begin{proof}
  Suppose we are given a family of types
  \(\Gamma, x : \Kop_B \vdash_{\cttmodel{\cat{E}}} P(x)\) together
  with the terms below.
  \begin{align*}
    R &: \prod_{a : A} \prod_{f : B(a) \to \Kop_B} \left(\prod_{b : B(a)}
        P(f(b))\right) \to P(\ext(f, c)) \\
    S &: \prod_{a : A} \prod_{f : B(a) \to \Kop_B} \, \prod_{f' : \prod_{b :
        B(a)} P(f(b))} \, \prod_{b : B(a)} \prod_{i : \I} P(\isext(a, f, b)(i))
  \end{align*}

  We need to define a term \(\Gamma, x : \Kop_B \vdash s(x) : P(x)\)
  satisfying the appropriate equalities. We define \(s\) by higher
  recursion on the construction of \(\Kop_B\). We first deal with the
  case \(s(\paste(a, c, f))\). Recalling that \(\cone(B(a))\) is
  defined as a pushout, we can split into the two cases
  \(c = \inl(\ast)\) and \(c = \inr(b, i)\) for some \(b : B(a)\) and
  \(i : \I\).

  We define
  \begin{align*}
    s(\paste(a, \inl(\ast), f)) &:\equiv R(a, f, s \circ f) \\
    s(\paste(a, \inr(b, i), f)) &:\equiv S(a, f, s \circ f, b, i)
  \end{align*}
  It is straightforward to check that this does preserve the reduction
  and pushout equations and so does give a well defined map. One can
  show it is a section again by higher recursion and the computation
  rules are satisfied by definition.

  Finally, to define \(s(\hcomp(\varphi, u))\) we use the fibration
  structure on \(\Gamma, x : \Kop_B \vdash_{\cat{E}} P(x)\).
\end{proof}

\endgroup
\endgroup

\subsection{Internal Cubical Models}
\label{sec:presheaf-models}
\begingroup
\newcommand{\Sx}{\mathcal{S}}
\newcommand{\B}{\Box}

Let \(\Sx\) be a model of dependent type theory with dependent product
types, dependent sum types, extensional identity types, unit type,
finite colimits, \(W\)-types and a countable chain of universes. We
also assume that every context of \(\Sx\) is isomorphic to \(1.X\) for
some type \(1 \vdash_{\Sx} X\). In particular, the category
\(\Ctx^{\Sx}\) is finitely complete so that internal categories in
\(\Ctx^{\Sx}\) make sense. Let \(\B\) denote the internal category in
\(\Ctx^{\Sx}\) in which the objects are the natural numbers and the
morphisms from \(n\) to \(m\) are the order-preserving functions
\(\Bool^{n} \to \Bool^{m}\). Note that \(\Sx\) has a natural number
object since it has \(W\)-types. We will refer to internal presheaves
over \(\B\) as \emph{internal cubical objects}.

\begin{theorem}
  Under those assumptions, the category of internal cubical objects in
  \(\Sx\) is part of a model of type theory that satisfies Assumption
  \ref{asm:orton-pitts}.
\end{theorem}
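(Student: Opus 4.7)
The plan is to build the model of internal cubical objects by transporting the standard external construction of cubical models into the internal language of $\Sx$. First I would specify the cwf: take contexts to be internal presheaves on $\B$ in $\Sx$, types over $\Gamma$ to be internal functors on the category of elements of $\Gamma$ valued in an appropriate universe, and elements to be global sections. Since $\Sx$ supports dependent products and sums, extensional identity, unit, finite coproducts, natural numbers, $W$-types and a chain of universes, and $\Ctx^{\Sx}$ is finitely complete, the corresponding structure on internal presheaves is obtained by the usual pointwise constructions; the universes come from a Hofmann--Streicher style lifting indexed over $\B$. The requirement that every context has the form $1.X$ transfers from the corresponding property of $\Sx$, since a presheaf $\Gamma$ is itself presented as a type over the terminal presheaf.

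Next I would introduce $\I$, $\Prop$ and the lattice structure. Define $\I$ to be the (internal) Yoneda embedding of the object $1 \in \B$, so internally $\I$ assigns to each $n$ the set of order-preserving maps $\Bool^{n} \to \Bool$; the constants $\Izero, \Ione$ and the operations $\Imin, \Imax$ are inherited from the Boolean lattice structure on $\Bool$. For $\Prop$ I would take the internal family of propositions generated by the face formulas $i = \Izero$ and $i = \Ione$ under conjunction and disjunction, giving the standard face lattice of cofibrations. Verifying $\mathtt{ax_{1}}$--$\mathtt{ax_{9}}$ then reduces to checking concrete lattice identities in $\I$ together with closure properties of $\Prop$, and propositional extensionality is a direct consequence of the inductive description of $\Prop$.

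For the right adjoint to $(-)^{\I}$, the key observation is that $\I$ is the Yoneda embedding of an object of $\B$ and therefore internally tiny: exponentiation by a representable preserves all internal colimits, so by the usual \emph{amazing right adjoint} construction for presheaf categories, $(-)^{\I}$ admits a right adjoint, whose value at an internal cubical object $X$ is given componentwise by exponentiating $X$ by the product of $\I$ with a representable. The construction is parametric over $\Sx$ and only uses the completeness of $\Ctx^{\Sx}$.

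The main obstacle, and the technically most involved step, is constructing split cofibrant $W$-types with reductions stable under reindexing. Following the strategy of \cite{swanwtypesreductions}, I would first form the internal $W$-type in $\Sx$ associated to the underlying polynomial $(Y, X)$ of a cofibrant polynomial with reductions and then impose the reduction equations $s(y, \alpha) = \alpha(k(y, r))$ for $r : R(y)$; because $R(y)$ is a cofibrant proposition the quotient is controlled, and splitness is secured by selecting canonical representatives using the already split $W$-types assumed in $\Sx$. Checking that the resulting algebra is initial and that initiality is preserved strictly under reindexing in the internal presheaf category is where the bulk of the work lies, but otherwise the argument is parallel to the external cubical sets case. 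Once this is in place, every clause of Assumption \ref{asm:orton-pitts} has been verified.
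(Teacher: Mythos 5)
Your outline of the easy parts (the presheaf cwf with pointwise structure and Hofmann--Streicher style universes, $\I$ as the representable at $1$ with connections inherited from $\Bool$, tininess of a representable giving the right adjoint to $(-)^{\I}$) matches the route the paper takes; for all of this the paper simply invokes Orton--Pitts and the cubical assemblies paper, noting those verifications go through over an arbitrary base $\mathcal{S}$. One genuine divergence already appears in your choice of $\Prop$: you take the face lattice generated by $(i=0)$ and $(i=1)$, whereas the paper takes the presheaf of \emph{locally decidable} propositions. This is not cosmetic, because the only non-cited ingredient of the theorem is the existence of cofibrant $W$-types with reductions, and the imported theorem produces initial algebras only for \emph{locally decidable} reductions $R\rightarrowtail Y$; with locally decidable cofibrations that hypothesis is automatic, while with your face lattice you would still owe an argument that every cofibrant proposition is locally decidable internally in $\mathcal{S}$ (and, as stated, an ``inductive description'' does not give propositional extensionality unless the face lattice is construed as a subpresheaf of propositions rather than as syntactically generated formulas).

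More seriously, your construction of the $W$-types with reductions does not work as described. Forming the plain $W$-type of $(Y,X)$ and then ``imposing'' the equations $s(y,\alpha)=\alpha(k(y,r))$ is not available constructively: to equip the quotient with a polynomial-algebra structure you must lift a family valued in the quotient along the quotient epimorphism inside an exponential $(-)^{X(y)}$, which requires choice or projectivity you do not have; and nothing in your sketch uses local decidability of $R$, which is exactly what makes the real construction go through (normal forms whose presheaf \emph{restriction maps} perform the reductions by case-splitting on membership in $R$, followed by passing to hereditarily natural elements). Cofibrancy of $R(y)$ alone does not ``control'' the quotient. Finally, splitness is not a matter of ``selecting canonical representatives'': the issue is strict stability of the whole construction under reindexing along arbitrary context maps, and split $W$-types in $\mathcal{S}$ do not address it, since the non-strict construction is only pullback-stable up to isomorphism. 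The paper's device is to define the value of the strict $W$-type at a stage $(c,\gamma)$ with $\gamma\in\Gamma(c)$ by reindexing the polynomial along the Yoneda map $\Box(-,c)\to\Gamma$ classifying $\gamma$, running the non-strict construction over $\Box(-,c)$, and evaluating at $(c,1_{c})$; strict stability then holds by definition, and the result is identified with the non-strict $W$-type by an isomorphism built by recursion. Some such strictification step is indispensable and is missing from your argument.
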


\begingroup
\newcommand{\pca}{\mathcal{A}}
\newcommand{\UFam}{\mathbf{UFam}}

\begin{example}
  Let \(\pca\) be a partial combinatory algebra. It is well-known that
  the category \(\Asm(\pca)\) of assemblies on \(\pca\) is part of a
  model of type theory with dependent product types, dependent sum
  types, extensional identity types, unit type, finite colimits. It is
  also known that \(\Asm(\pca)\) has \(W\)-types (an explicit
  construction is found in \cite[Section
  2.2]{vandenbergthesis}). Assuming a countable chain of Grothendieck
  universes in the set theory, \(\Asm(\pca)\) has a countable chain of
  universes. Thus the category \(\CAsm(\pca)\) of internal cubical
  objects in \(\Asm(\pca)\) is part of a model of type theory that
  satisfies Assumption \ref{asm:orton-pitts}.
\end{example}
\endgroup

It is shown in \cite{orton2018axioms} that, when \(\Sx = \Set\), the
category of presheaves over \(\B\) satisfies all the axioms of Orton
and Pitts if we take \(\Prop\) to be the presheaf of locally decidable
propositions. The proof works for an arbitrary \(\Sx\) and one can
show that the category of internal cubical objects in \(\Sx\) is part
of a model of type theory satisfying Assumption \ref{asm:orton-pitts}
except the existence of cofibrant \(W\)-types with reductions (see
also \cite{cubicalassemblies}). To construct cofibrant \(W\)-types
with reductions, we recall the following from
\cite{swanwtypesreductions}.

\begingroup
\newcommand{\E}{\mathcal{E}}
\newcommand{\C}{\mathbf{C}}
\newcommand{\yoneda}[1]{\C(-, #1)}

\begin{theorem}
  Let \(\E\) be a locally cartesian closed category with finite
  colimits and disjoint coproducts and \(W\)-types, and let \(\C\) be
  an internal category in \(\E\). Then the category \(\PSh(\C)\) of
  internal presheaves over \(\C\) has all locally decidable
  \(W\)-types with reductions.
\end{theorem}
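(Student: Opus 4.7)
The plan is to internalize the construction from \cite{swanwtypesreductions} into the internal type theory of $\E$, using the fact that internal presheaves over $\C$ inherit much of their structure from $\E$ through its internal language. Since $\E$ is locally cartesian closed with finite colimits, disjoint coproducts and $W$-types, I can carry out recursion-theoretic arguments internally in $\E$ while keeping track of the $\C$-action separately.

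First, I would unfold a polynomial with locally decidable reductions in $\PSh(\C)$ into concrete data in $\E$: a constructor object $Y$ with a $\C$-action, an arity family $X \to Y$ with compatible action, a complemented (locally decidable) subobject $R \hookrightarrow Y$, and a reduction map $k : R \to X$ over $Y$, all compatible with restriction. Local decidability, combined with disjointness of coproducts in $\E$, yields a decomposition $Y \cong R + Y'$ separating the ``reducing constructors'' from the ``free constructors''.

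Second, I would build the underlying object of the $W$-type with reductions using $\E$'s ordinary $W$-types. Following Swan's strategy, I would modify the polynomial so that the $R$-constructors do not generate new tree nodes but instead force the corresponding subtree at index $k$ to serve as their value, and then take the ordinary $W$-type of this modified polynomial in $\E$. The resulting object $W$ carries a canonical supremum operation and satisfies the reduction equations by construction. The $\C$-action on $W$ is then obtained by higher recursion from the given actions on $Y$ and $X$, and stability of the reduction data under restriction in $\PSh(\C)$ guarantees that the action is well-defined and makes $W$ an internal presheaf. Initiality is verified by observing that any algebra $A$ in $\PSh(\C)$ satisfying the reduction equations gives rise, via recursion on the modified polynomial, to a unique algebra morphism $W \to A$ in $\E$, which is automatically an internal-presheaf morphism since everything has been defined in the internal language of $\E$.

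The main obstacle is the Swan-style decomposition and the proof that it actually represents the free $W$-algebra \emph{with reductions} rather than some other object: one must show that the ordinary $W$-type of the modified polynomial in $\E$ satisfies the universal property with respect to algebras obeying the original reduction equations. This requires a careful interplay between disjointness of coproducts, the decidability of the complement $R \hookrightarrow Y$, and the recursion principle for $W$-types — ensuring, in particular, that algebra morphisms into an arbitrary target commute strictly with the reductions — and is where the essential technical content of \cite{swanwtypesreductions} is invested.
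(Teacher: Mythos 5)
You have the right general picture---trees built only from non-reducing constructors, with the reduction applied when a constructor ``falls into'' \(R\)---but the sketch glosses over exactly the point where the technical content of the cited theorem lies, and as stated the construction would not go through. First, local decidability of \(R \rightarrowtail Y\) is only a stagewise condition: each \(R(c) \rightarrowtail Y(c)\) is complemented in \(\mathcal{E}\), but the complement is \emph{not} closed under the \(\mathbf{C}\)-action, so there is no decomposition \(Y \cong R + Y'\) in \(\PSh(\mathbf{C})\) and hence no single ``modified polynomial'' whose ordinary \(W\)-type (taken in \(\mathcal{E}\)) can serve as the underlying object. Second, and more seriously, your claim that the \(\mathbf{C}\)-action is ``obtained by higher recursion'' and is well-defined by stability of the reduction data is precisely what fails for a naive tree object: if \(\sup(y,\alpha)\) is a tree at stage \(d\) with \(y \notin R(d)\), and \(g : e \to d\) sends \(y\) into \(R(e)\), then the restricted tree must be the subtree at the reduction index \(k(y \cdot g)\) --- but that index lives in the arity of the \emph{restricted} constructor \(y \cdot g\), which is simply not among the subtrees of a tree whose branching is only \(X(y)\) at stage \(d\). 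The construction of \cite{swanwtypesreductions}, unfolded later in this paper when the split version is built, handles this by forming a dependent \(W\)-type \(N_0\) of \emph{normal forms} indexed over the category of elements, in which the attaching datum of a constructor at stage \(d\) supplies, for every \(g : e \to d\), a map from the arity of \(y \cdot g\) into the normal forms at stage \(e\); only with these enlarged arities does decidability of \(R\) allow the restriction maps to be defined by case analysis, returning \(\alpha(g, k(y \cdot g))\) in the reducing case.

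Even after that repair, the restriction maps on \(N_0\) are not automatically functorial and the attaching families need not be natural, so \(N_0\) is not yet an internal presheaf: one must pass to the subobject \(N\) of \emph{hereditarily natural} elements, check that \(N\) is closed under the restriction maps and under \(\sup\), and then prove that it is initial among algebras satisfying the reduction equations --- and the fact that the induced algebra map into an arbitrary algebra in \(\PSh(\mathbf{C})\) is natural is itself proved by induction, not ``automatic from working in the internal language of \(\mathcal{E}\)''. These two ingredients --- arities indexed over all morphisms into the current stage, and the hereditarily-natural subobject --- are the essential content of the proof, and your proposal, which defers them to its final sentence, does not supply them.
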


We furthermore observe that one can show that this construction is
stable under pullback up to isomorphism using a technique similar to
the one used by Gambino and Hyland for ordinary \(W\)-types. The
reason is that pointed polynomial endofunctors are stable under
pullback because they are constructed from \(\Sigma\) types, \(\Pi\)
types and pushouts, all of which are preserved by pullback, and in
locally cartesian closed categories the initial algebras of such
pointed endofunctors are also stable under pullback. However, to
ensure that the construction is strictly preserved requires a little
more work.

We show how to use the non split version above to construct split
\(W\)-types with reductions. The essential idea is to carry out the
construction given above ``pointwise,'' expanding out the method
suggested by Coquand, Huber and M\"{o}rtberg in \cite[Section
2.2]{coquandhubermortberg}. Since we define cubical sets here as a
category of presheaves in the usual, contravariant sense, we work with
contravariant presheaves here, although the original proof in
\cite{swanwtypesreductions} is phrased in terms of covariant
presheaves. We also make minor adjustments to fit with the ``split''
version appearing in section \ref{sec:w-types-with-1}.

Suppose that we are given a context \(\Gamma \in \PSh(\C)\) together
with a type \(Y \in \PSh(\int_\C \Gamma)\), a type
\(X \in \PSh(\int_C \{Y\})\), a locally decidable monomorphism
\(R \rightarrowtail Y\) and a map \(k : \prod_{y
  : R} X(y)\) over $\int_\C \Gamma$.

We need to show how to define a strict version of the \(W\)-type with
reductions \(W(Y, X, R)\). We will refer to the new strict version as
\(W'(Y, X, R)\). This should be an element of
\(\PSh(\int_\C \Gamma)\), so in particular we need to define a family
of types \(W'(Y, X, R)(c, \gamma)\) indexed by objects \(c\) of \(\C\)
and elements \(\gamma : \Gamma(c)\).

We fix such a \(c\) and \(\gamma\). We first note that we have a a
locally decidable polynomial with reductions
\(Y_\gamma, X_\gamma, R_\gamma\) in the internal presheaf category
\(\PSh(\int_\C \yoneda{c})\) given by reindexing along the map
\(\yoneda{c} \to \Gamma\) given by Yoneda. We then carry out the ``non
strict'' construction to get a presheaf
\(W(Y_\gamma, X_\gamma, R_\gamma)\) on \(\int_\C \yoneda c\) and
finally we define \(W'(Y, X, R)(c, \gamma)\) to be
\(W(Y_\gamma, X_\gamma, R_\gamma) (c, 1_c)\).

For completeness, we unfold the definitions to obtain the following
explicit description of \(W'(Y, X, R)(c, \gamma)\). We first define
the dependent \(W\)-type \(N_0\) of \emph{normal forms} indexed by the
objects \((d, f)\) of \(\int_\C \yoneda{c}\).

If \((d, f)\) is an object of \(\int_\C \yoneda{c}\) we add an element
to \(N_0(d, f)\) of the form \(\sup(y, \alpha)\) whenever \(y\) is an
element of \(Y(d, \Gamma(f)(\gamma))\) that does not belong to the
subobject \(R(d, \Gamma(f)(\gamma))\) and
\(\alpha\) is an element of the following type.
\begin{equation*}
  \prod_{g : e \to d} N_0(e, f \circ g)^{X(f \circ g, \Gamma(f\circ
    g)(\gamma), Y(g)(y))}
\end{equation*}

The next step is to define maps \(N_0(d, f) \to N_0(e, f \circ g)\)
whenever \(g \colon e \to d\) and \(f \colon d \to c\) in \(\C\). Say
that we are given an element of \(N_0(d, f)\) of the form
\(\sup(y, \alpha)\).  We recall that \(N_0(g)(\sup(y, \alpha))\) is
defined by splitting into cases depending on whether or not \(y\)
belongs to the subobject \(R(d, \Gamma(f)(\gamma))\). If it does, we
define \(N_0(g)(\sup(y, \alpha))\) to be \(\alpha(g,
k(y))\). Otherwise, we define \(N_0(g)(\sup(y, \alpha))\) to be
\(\sup(Y(g)(y), \alpha')\) where \(\alpha'(h, x)\) is defined to be
\(\alpha(g \circ h, x)\).

We then define \(N(d, f)\) for each \(f : d \to c\) to be the
subobject of \(N_0(d, f)\) consisting of hereditarily natural elements
and verify that this does indeed define a presheaf on
\(\int_\C \yoneda{c}\). But this is identical to \cite[Section
4]{swanwtypesreductions} so we omit the details.

If we then define \(W'(Y, X, R)(c, \gamma)\) to be \(N(c, 1_c)\), then
this is strictly stable under reindexing by definition.

One can construct by recursion an isomorphism between \(N(d, f)\) and
\(W(Y, X, R)(d, \Gamma(f)(\gamma))\) for each \(f : d \to c\). In
particular this gives us an isomorphism between \(N(c, 1_c)\) and
\(W(Y, X, R)(c, \gamma)\), and so we have a canonical isomorphism
between \(W'(Y, X, R)(c, \gamma)\) and \(W(Y, X, R)(c, \gamma)\). It
follows that we can assign an initial algebra structure to
\(W'(Y, X, R)(c, \gamma)\) by transferring the algebra structure on
\(W(Y, X, R)(c, \gamma)\) via the isomorphism.

\endgroup

\endgroup

\subsection{Discrete Types}
\label{sec:discrete-types}

We introduce a class of types in an Orton-Pitts model for future
use. Let \(\cat{E}\) be a model of type theory satisfying Assumption
\ref{asm:orton-pitts}.

\begin{definition}
  A type \(1 \vdash X\) is said to be \emph{discrete} if the
  map \(\lambda x.\lambda i.x : X \to X^{\I}\) is an isomorphism.
\end{definition}

The proofs of the following propositions are found in
\cite{cubicalassemblies}.

\begin{proposition}
  \label{prop:discrete-composition}
  Every discrete type \(1 \vdash X\) carries a fibration
  structure.
\end{proposition}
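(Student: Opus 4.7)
The plan is to observe that discreteness forces every partial path in $X$ to be constant, so the CCHM-style composition operator required by a fibration structure in $\cttmodel{\cat{E}}$ can be defined simply by returning the given base point on the nose.

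First, I would unpack what the Orton--Pitts construction demands of a fibration structure. A fibration structure on a type $\Gamma \vdash X$ consists of, for every cofibrant proposition $\varphi : \Prop$, every path $\gamma : \I \to \Gamma$, every endpoint $\epsilon \in \{\Izero, \Ione\}$, every partial element $u : \prod_{i : \I} \varphi \to X(\gamma(i))$ and every base point $a_{\epsilon} : X(\gamma(\epsilon))$ which is equal to $u(\epsilon)$ whenever $\varphi$ holds, a composite $a_{1-\epsilon} : X(\gamma(1-\epsilon))$ which equals $u(1-\epsilon)$ whenever $\varphi$ holds, stable under reindexing. For a type $1 \vdash X$, every reindexing $X(\gamma(i))$ is canonically identified with $X$ itself, so this data simplifies considerably: $u$ becomes a partial map $\varphi \to X^{\I}$ and $a_{\epsilon}$ is an element of $X$.

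Second, I would define the composite by $a_{1-\epsilon} := a_{\epsilon}$. To verify the boundary condition, assume $\varphi$ holds. Then $u$ restricts to a genuine element of $X^{\I}$, and by discreteness the inverse of $\lambda x.\lambda i.x : X \to X^{\I}$ sends $u$ to some $\bar{u} : X$ with $u(i) = \bar{u}$ for all $i : \I$. In particular $u(\epsilon) = u(1-\epsilon)$, and combined with the hypothesis $a_{\epsilon} = u(\epsilon)$ this gives $a_{1-\epsilon} = a_{\epsilon} = u(1-\epsilon)$, as required. The construction is manifestly stable under reindexing because it does not depend on the context $\Gamma$ or the path $\gamma$ at all.

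The main thing to get right is matching the precise formulation of fibration structure used here: the Orton--Pitts composition operator, possibly packaged as a transport operator together with $\hcomp$ in the style of \cite{coquandhubermortberg}. Either way the argument is the same, since a discrete $X$ has a canonical transport operator (the identity $X \to X$, valid because $X(\gamma(0)) = X = X(\gamma(1))$) and a canonical $\hcomp$ (returning the base point), with all coherence conditions reduced to constancy of paths in $X$. There is no substantive obstacle beyond this bookkeeping.
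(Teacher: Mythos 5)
Your proof is correct and is essentially the standard argument used in the cited reference \cite{cubicalassemblies} (the paper itself defers the proof there): since discreteness makes every (partial) path in \(X\) constant, the composition operator can simply return the given base point, and stability under reindexing is immediate. No gaps.
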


\begin{proposition}
  \label{prop:decidable-discrete}
  If a type \(1 \vdash X\) has decidable equality, then it is
  discrete.
\end{proposition}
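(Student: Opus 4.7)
The plan is to show that $c := \lambda x.\lambda i.\,x : X \to X^{\I}$ is an isomorphism. Evaluation at $\Izero$ provides a retraction $X^{\I} \to X$, so $c$ is split monic automatically, and the substantive content is surjectivity. It therefore suffices to prove that every $p : \I \to X$ is constant, meaning $p(i) = p(\Izero)$ for all $i : \I$; after that, the two-sided inverse to $c$ is simply $p \mapsto p(\Izero)$.

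My first step is to reduce to the case $X = \Bool$ using decidable equality. The hypothesis supplies a function $\mathrm{eq} : X \times X \to \Bool$ with $\mathrm{eq}(x, y) = \inl(\ast)$ when $x = y$ and $\mathrm{eq}(x, y) = \inr(\ast)$ otherwise. Given $p : \I \to X$, define $\chi : \I \to \Bool$ by $\chi(i) := \mathrm{eq}(p(\Izero), p(i))$, so that by construction $\chi(\Izero) = \inl(\ast)$. If $\chi$ can be shown to be constant, then $\chi \equiv \lambda i.\inl(\ast)$, and unwinding the defining property of $\mathrm{eq}$ gives $p(i) = p(\Izero)$ for every $i$, as required.

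The second step, which is the main obstacle, is to establish that $\Bool$ itself is discrete, so that $\chi$ necessarily factors through $\Bool \to \Bool^{\I}$. The terminal type $1$ is trivially discrete since $1^{\I} \cong 1$, so this reduces to showing that discrete types are closed under binary coproducts in an Orton-Pitts model. The crucial input here is the package of Orton-Pitts axioms $\mathtt{ax}_{1}$--$\mathtt{ax}_{9}$ together with propositional extensionality for $\Prop$: in particular, strict disjointness $\Izero \neq \Ione$ together with the lattice structure on $\I$ force a map $\I \to 1 + 1$ to correspond to a partition of $\I$ into two cofibrant, jointly exhaustive, mutually exclusive subsets, exactly one of which must equal all of $\I$. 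Once this is in hand the rest is formal: stability of the inverse under reindexing follows from strict preservation of coproducts and of $(-)^{\I}$. Since the coproduct-closure argument is already spelled out in \cite{cubicalassemblies}, my plan would be to cite it rather than reproduce the axiom-chasing, and present only the reduction via $\mathrm{eq}$.
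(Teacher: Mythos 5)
Your outline is correct in substance, and it is essentially the argument that the paper itself defers to \cite{cubicalassemblies} for: decidable equality turns a path into a decidable (\(\Bool\)-valued) predicate on \(\I\), and one then concludes that every path \(p : \I \to X\) is constant, which is exactly the missing composite \(c \circ \mathrm{ev}_{\Izero} = \mathrm{id}\). One correction, though: the justification you offer for the key step is misattributed. Strict disjointness \(\Izero \neq \Ione\) together with the lattice/connection structure on \(\I\) does \emph{not} force a map \(\I \to \Bool\) to be constant --- all of that structure is present when \(\I\) is replaced by \(\Bool\) itself, where the identity map is a nonconstant counterexample, and the relevant subsets are decidable (complemented), not cofibrant in the sense of \(\Prop\). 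What does the work is the internal \emph{connectedness} of \(\I\), which is one of the Orton--Pitts axioms \(\mathtt{ax_{1}}\)--\(\mathtt{ax_{9}}\): every decidable predicate on \(\I\) holds everywhere or nowhere. Once that axiom is named, your detour through \(\Bool\) and closure of discrete types under coproducts is unnecessary --- apply connectedness directly to the decidable predicate \(\varphi(i) :\equiv (p(i) = p(\Izero))\), which is true at \(\Izero\), hence everywhere. Alternatively, in the setting of Assumption \ref{asm:orton-pitts} your route can be completed without connectedness at all: \((-)^{\I}\) has a right adjoint, hence preserves coproducts, so the canonical map \(\Bool \cong 1^{\I} + 1^{\I} \to \Bool^{\I}\) is an isomorphism and \(\Bool\) is discrete (the same trick the paper uses in Proposition \ref{prop:coproduct-null}). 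Finally, the remark about stability under reindexing is beside the point here: the proposition concerns closed types \(1 \vdash X\), and being an isomorphism is a property, not extra structure, in the extensional internal language.
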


\begin{corollary}
  \label{cor:nno-discrete}
  The natural number object in \(\cat{E}\) is discrete.
\end{corollary}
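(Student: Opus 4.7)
The plan is to apply Proposition \ref{prop:decidable-discrete} directly, reducing the task to showing that the natural number object \(\N\) has decidable equality, i.e., constructing in the internal language of \(\cat{E}\) an element of
\[
  \prod_{m, n : \N} (m = n) + \neg(m = n).
\]
All the ingredients are available: Assumption \ref{asm:orton-pitts} guarantees that \(\cat{E}\) supports dependent product and sum types, extensional identity types, the unit type, finite coproducts, and a natural number object, so the standard internal argument transfers verbatim.

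The argument itself is a double induction on \(m\) and \(n\). The base case \(m = 0\), \(n = 0\) is inhabited by reflexivity; the cases \(m = 0\), \(n = \mathrm{succ}(n')\) and \(m = \mathrm{succ}(m')\), \(n = 0\) are disposed of by defining, by recursion on \(\N\), a map \(\chi : \N \to \mathbf{2}\) with \(\chi(0) = \mathrm{inl}(\ast)\) and \(\chi(\mathrm{succ}(-)) = \mathrm{inr}(\ast)\); disjointness of the coproduct \(\mathbf{2} = 1 + 1\) then gives \(\neg(0 = \mathrm{succ}(n'))\) by transport along \(\chi\). The inductive step \(m = \mathrm{succ}(m')\), \(n = \mathrm{succ}(n')\) uses injectivity of successor, obtained from a predecessor function \(\mathrm{pred} : \N \to \N\) (defined by recursion with \(\mathrm{pred}(0) = 0\) and \(\mathrm{pred}(\mathrm{succ}(k)) = k\)); by the induction hypothesis we have \((m' = n') + \neg(m' = n')\), and a case split produces the desired decision for \(\mathrm{succ}(m') = \mathrm{succ}(n')\).

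With decidable equality in hand, Proposition \ref{prop:decidable-discrete} immediately yields that \(\N\) is discrete. There is no real obstacle: the only point to be checked is that the standard type-theoretic proof of decidable equality on \(\N\) uses only constructors available in \(\cat{E}\) by Assumption \ref{asm:orton-pitts}, which it does.
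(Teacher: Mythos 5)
Your proposal is correct and matches the paper's (implicit) argument exactly: the corollary is meant to follow from Proposition \ref{prop:decidable-discrete} together with the standard internal proof that the natural number object has decidable equality, which is precisely what you spell out. The only cosmetic remark is that instead of appealing to disjointness of \(\mathbf{2} = 1 + 1\) one could equally define a family over \(\N\) by recursion into a universe (available by Assumption \ref{asm:orton-pitts}) to refute \(0 = \mathrm{succ}(n')\), but your route is fine in the intended models.
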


\section{Church's Thesis}
\label{sec:churchs-thesis}

We consider a dependent type theory with dependent product types,
dependent sum types, identity types, unit type, disjoint finite
coproducts, propositional truncation and natural numbers. In such a
dependent type theory, one can define Kleene's computation predicate
\(T(e, x, z)\) and result extraction function \(U(z)\) as primitive
recursive functions \(T : \N \times \N \times \N \to \Bool\) and \(U : \N \to
\N\). The statement \(T(e, x, z)\) means that \(z\) codes a
computation on Turing machine \(e\) with input \(x\) and \(U(z)\) is
the output of the computation. \emph{Church's Thesis} is the following
axiom.
\[
  \forall_{f : \N \to \N}\exists_{e : \N}\forall_{x : \N}\exists_{z : \N}T(e, x, z) \land U(z)
  = f(x)
\]
Since the type \(\sum_{z : \N}T(e, x, z) \times U(z) = f(x)\) is a
proposition, Church's Thesis is equivalent to the type
\[
  \prod_{f : \N \to \N}\trunc{\sum_{e : \N}\prod_{x : \N}\sum_{z : \N}T(e, x, z) \times U(z) =
  f(x)}.
\]

\subsection{Failure of Church's Thesis in Internal Cubical Models}
\label{sec:failure-in-presheaves}
\begingroup
\newcommand{\E}{\mathcal{S}}
\newcommand{\C}{\Box}
\newcommand{\yoneda}{\mathbf{y}}
\newcommand{\adj}{\dashv}       
\newcommand{\vadj}{\rotatebox[origin=c]{270}{\(\adj\)}} 

Let \(\E\) be a model of type theory as in Section
\ref{sec:presheaf-models}. We have seen that the category \(\PSh(\C)\)
of internal cubical objects in \(\E\) is part of a model of type
theory satisfying Assumption \ref{asm:orton-pitts}. In this section we
show the following theorem.

\begin{theorem}
  \label{thm:churchs-thesis-fails-in-presheaves}
  The negation of Church's Thesis holds in the model of univalent type
  theory \(\cttmodel{\PSh(\C)}\).
\end{theorem}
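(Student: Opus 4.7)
The plan is to work internally in $\cttmodel{\PSh(\C)}$: assume $c : \mathrm{CT}$ and derive $\bot$, yielding the desired term of $\mathrm{CT} \to \bot$.

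The key preliminary is that the propositional truncation constructed in Section~\ref{sec:high-induct-types} is a $W$-type with reductions whose non-basic constructors ($\mathsf{hcomp}$ and $\mathsf{sq}$) both require pre-existing inhabitants as input; hence $\trunc{A}$ is uninhabited in every context whenever $A$ is. The proof therefore reduces to producing an $f : \N \to \N$ (possibly over a non-terminal context $\Gamma$) such that $\sum_{e : \N} \mathrm{Computes}(e, f)$ has no sections; then composing the evaluation $\mathrm{CT} \to \trunc{\sum_e \mathrm{Computes}(e, f)}$ at $f$ with the unique map from the empty type to $\bot$ completes the construction of $\neg \mathrm{CT}$.

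When $\E = \Set$ the model is cubical sets, and since $\N$ and $\N \to \N$ are discrete (Corollary~\ref{cor:nno-discrete}) a global element of $\N \to \N$ is simply a set-theoretic function; any externally non-computable choice of $f$ makes $\sum_e \mathrm{Computes}(e, f)$ empty as a cubical set, no $e \in \N$ witnesses the property over any context, and the argument concludes.

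The main obstacle is the assembly case $\E = \Asm$, where every global assembly morphism $\N \to \N$ is by construction computable, so no global $f$ works. I would instead take $f$ to be the generic function over $\Gamma = \N \to \N$: a section of $\sum_e \mathrm{Computes}(e, f)$ over $\Gamma$ would be an assembly morphism assigning to each code of a computable function a canonical index, and no such morphism exists because distinct indices can realize the same function and no partial computable normalization picks one out. The subtle step is transferring this absence-of-base-section to an absence-of-truncation-section over $\Gamma$: since $\sum_e \mathrm{Computes}(e, f)$ does have pointwise-existing sections (just not uniformly over $\Gamma$), the preliminary above does not apply directly, and one must verify that the cubical truncation's gluing constructors cannot fabricate a spurious section from this pointwise data. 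I expect this to require a diagonal-style argument exploiting the discreteness of $\N$ to push the obstruction from the base type through the $W$-type with reductions, and it is the core technical work of the proof.
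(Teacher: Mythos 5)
Your reduction ``if the fibre is empty then its truncation is empty'' is fine as far as it goes, but it only disposes of the case where $\sum_{e}\dots$ is empty as a presheaf, i.e.\ essentially the case of cubical sets over $\Set$. The theorem is asserted for an arbitrary base model $\mathcal{S}$ as in Section~\ref{sec:presheaf-models}, and the case that matters ($\mathcal{S}$ a category of assemblies) is exactly the one you defer: over the generic $f$ every fibre of $\sum_{e}\dots$ is inhabited, only a uniform section is missing, and the question of whether the cubical truncation can manufacture a section out of this pointwise data is not a verification to be pushed through the $W$-type with reductions --- it is the entire content of the theorem. Note that the truncated form of Church's Thesis \emph{does} hold in (cubical) assemblies when truncation is interpreted as the mono image in extensional type theory, so any successful argument must exploit the specific path-based truncation; your proposed ``diagonal-style argument exploiting the discreteness of $\N$'' is not given, and it is far from clear it can be carried out by direct analysis of the $\mathsf{hcomp}$/$\mathsf{sq}$ constructors. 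As it stands the proposal proves the statement only for the base $\Set$.

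The idea you are missing, and which the paper uses instead, is to never analyze the higher inductive type at all: one exhibits a fibrant hproposition of $\cttmodel{\PSh(\Box)}$ that still carries untruncated witnesses, namely the codiscrete type $\nabla_{N\to N}\itpr{C'}^{\mathcal{S}}$, the right adjoint to evaluation at the object $0$ of the cube category. By Proposition~\ref{prop:codiscrete-proposition} this is a proposition in the cubical model, so the recursion principle of propositional truncation gives a map $\trunc{\itpr{C'}}(f) \to \nabla_{N\to N}\itpr{C'}^{\mathcal{S}}(f)$, eliminating the truncation without inspecting its constructors. Then $\prod_{f : \Delta(N\to N)}\nabla_{N\to N}\itpr{C'}^{\mathcal{S}}(f) \cong \nabla_{1}\bigl(\prod_{f : N\to N}\itpr{C'}^{\mathcal{S}}(f)\bigr)$, and applying $\nabla_{1}$ to the internal refutation in $\mathcal{S}$ of \emph{untruncated} Church's Thesis (the standard argument from function extensionality: comparing chosen indices decides the halting problem --- essentially your ``no canonical index'' observation, but used after the truncation has already been removed) yields a map to $\nabla_{1}\Initial \cong \Initial$. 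So your instinct about where the computational obstruction lives is right, but the mechanism for getting past the truncation --- the codiscrete reflection and the adjunction with evaluation at $0$ --- is absent, and without it the proof does not go through for the models the theorem is about.
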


To prove Theorem \ref{thm:churchs-thesis-fails-in-presheaves}, we
recall from \cite{cubicalassemblies} the notion of a codiscrete
presheaf. The constant presheaf functor \(\Delta : \E \to \PSh(\C)\)
extends to a morphism of cwf's and preserves (at least up to
isomorphism) several type constructors. Here we only need the
following.

\begin{proposition}
  \label{prop:constant-presheaf-preservation}
  The morphism \(\Delta : \E \to \PSh(\C)\) of cwf's preserves
  dependent product types, dependent sum types, extensional identity
  types and natural number objects.
\end{proposition}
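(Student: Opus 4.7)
The plan is to verify the four preservation claims separately, exploiting two structural facts about $\Delta$: that it fits into an adjoint triple $L \adj \Delta \adj G$, where $L$ is the internal colimit functor and $G$ is internal global sections, so that $\Delta$ preserves all internal limits and colimits; and that $\Delta$ is fully faithful, because the internal category $\C$ has a terminal object, namely $0$ (since the unique order-preserving map $\Bool^n \to \Bool^0$ is the constant one). The adjoint triple is standard in the external presheaf setting, and its verification in the internal language of $\E$ is routine.

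Preservation of $\Sigma$-types and extensional identity types follows from preservation of pullbacks: the cwf context extension is represented by a pullback of display maps, so $\Delta$ commutes with it; hence the $\Sigma$-type (a composite display map) and the extensional identity type (an equaliser of parallel projections) are preserved. Preservation of the natural number object uses the colimit presentation $\N = \mathrm{colim}(\Initial \to \Terminal \to \Terminal + \Terminal \to \cdots)$ in $\E$; since $\Delta$ preserves this colimit and since the same formula computes the NNO in $\PSh(\C)$, one obtains $\Delta(\N) \cong \N$ in $\PSh(\C)$.

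The main obstacle is preservation of $\Pi$-types, since $\Delta$ being a right adjoint does not automatically commute with the further right adjoints computing $\Pi$. The strategy is to exploit the fact that the internal category of elements $\int_\C \Delta\Gamma$ is isomorphic to $\C \times \Gamma$, so that types in the cwf $\PSh(\C)$ over $\Delta\Gamma$ of the form $\Delta X$ correspond to maps in $\E$ over $\Gamma$; the cwf $\Pi$-type on such constant families then unfolds to a coherent section construction over the slice which, by constancy in the $\C$-direction, degenerates to an ordinary section in $\E$. The resulting presheaf is constant in the $\C$-direction and matches $\Delta(\Pi_X Y)$ fibrewise over $\Gamma$, giving the desired natural isomorphism $\Pi_{\Delta X}(\Delta Y) \cong \Delta(\Pi_X Y)$. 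The detailed pointwise verification is carried out in \cite{cubicalassemblies}.
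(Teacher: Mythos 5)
The paper itself gives no proof of this proposition: it is imported from \cite{cubicalassemblies}, so your sketch has to be judged on its own terms rather than against an argument in the text. For the dependent sum, identity and dependent product clauses your outline is essentially right, and it is essentially the pointwise computation done in that reference: over a stage \(c\), a compatible family indexed by the slice \(\Box/c\) (equivalently, the category of elements of the representable at \(c\)) is pinned down by its component at \(1_c\), because that slice has a terminal object; this is the precise content of your phrase ``degenerates by constancy in the \(\Box\)-direction,'' and it is exactly why \(\Pi\) of a constant family along a constant family is again constant. (Full faithfulness of \(\Delta\) and the terminality of \(0\) in \(\Box\) are true but are not actually used anywhere in the argument.)

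The genuine gap is the natural numbers clause. You present \(\N\) as the colimit of the chain \(\Initial \to \Terminal \to \Terminal + \Terminal \to \cdots\), but the base model \(\mathcal{S}\) is only assumed to have \emph{finite} colimits, and the claim that its NNO has the universal property of this countable colimit is not a formal consequence of the assumptions: it is a real exactness property, and it can fail in elementary-topos-like settings with an NNO (in a filterquotient or ultrapower, two distinct maps out of the NNO can agree on every finite cardinal, destroying uniqueness of the mediating map). Moreover, even granting it in \(\mathcal{S}\), your argument needs the same unproved fact a second time in \(\PSh(\Box)\), namely that the colimit of that chain \emph{there} is the NNO; preservation of the colimit by \(\Delta\) alone does not give this. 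The repair is cheaper than the detour: verify directly that \(\Delta\N\), equipped with \(\Delta\) of zero and successor, satisfies the NNO universal property in internal presheaves. Given a presheaf \(F\) with a point and an endomorphism, recursion in \(\mathcal{S}\) yields a unique map \(\N \to F(c)\) at each stage \(c\), and the uniqueness clause forces these components to be natural in \(c\); in other words, the NNO in (internal) presheaves is computed pointwise and is the constant presheaf on the NNO of the base, which is exactly the statement to be proved.
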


A constant presheaf \(\Delta X\) is regarded as a type in
\(\cttmodel{\PSh(\C)}\) by the following proposition and Proposition
\ref{prop:discrete-composition}.

\begin{proposition}
  \label{prop:constant-presheaf-discrete}
  Constant presheaves are discrete.
\end{proposition}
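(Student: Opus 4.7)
The plan is to verify that the constant-path map \(\lambda x.\lambda i.x : \Delta X \to (\Delta X)^{\I}\) is an isomorphism of internal presheaves by computing both sides on each object and applying the Yoneda lemma. The key observation is that \(\I\) is the representable \(\yoneda(1)\) in the internal cube category \(\C\), so that exponentiating by \(\I\) amounts to a shift of the argument under Yoneda.

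Concretely, I would first observe that \(\C\) has finite products when interpreted internally in \(\E\): the product of \(n\) and \(m\) is \(n + m\), because order-preserving maps \(\Bool^{k} \to \Bool^{n+m}\) correspond bijectively to pairs of order-preserving maps into \(\Bool^{n}\) and \(\Bool^{m}\) under the componentwise order. Consequently \(\yoneda(d) \times \yoneda(1) \cong \yoneda(d + 1)\), and for any internal presheaf \(F\), the exponential formula in the presheaf category combined with Yoneda gives
\[
F^{\I}(d) \;=\; \PSh(\C)(\yoneda(d) \times \I,\, F) \;\cong\; \PSh(\C)(\yoneda(d+1),\, F) \;\cong\; F(d + 1).
\]
Applied to \(F = \Delta X\) this yields \((\Delta X)^{\I}(d) \cong \Delta X(d + 1) = X\), since by definition \(\Delta X\) takes value \(X\) on every object.

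Next I would check that under these identifications the map \(\lambda x.\lambda i.x\) becomes the identity \(X \to X\). Indeed, the constant natural transformation at \(x \in X\) is sent by the Yoneda isomorphism to its value at the universal element, which is \(x\); and the identification \(\Delta X(d) \cong \Delta X(d+1)\) uses only restriction maps of \(\Delta X\), all of which are identities. Naturality in \(d\) then promotes the pointwise identities to an isomorphism of presheaves.

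The main step requiring care is that this computation must be carried out internally in \(\E\) rather than in \(\Set\). I would need to verify that the product structure on \(\C\), the exponential formula for internal presheaves, and the Yoneda lemma all go through in the internal language of \(\E\). These are standard but slightly technical; the argument itself is otherwise essentially one application of Yoneda.
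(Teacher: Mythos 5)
Your argument is correct: since the cube category has binary products (with \(n+m\) the product of \(n\) and \(m\)) and \(\I\) is the representable presheaf on the \(1\)-cube in this model, one gets \((\Delta X)^{\I}(d) \cong \Delta X(d+1) = X\) naturally in \(d\), with the constant-path map becoming the identity, and the internalization in the base model is routine under the stated assumptions. The paper itself gives no proof of this proposition, deferring to the cited cubical assemblies paper, where the argument is essentially the one you give, so this is the same approach.
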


For types \(1 \vdash_{\E} X\) and \(x : X \vdash_{\E} Y(x)\), one can
define a type \(x : \Delta X \vdash_{\PSh(\C)} \nabla_{X}Y(x)\) called
the codiscrete presheaf which has the following properties.

\begin{proposition}
  \(\nabla_{X}\) is the right adjoint to the evaluation functor
  \((-)_{0}\) at \(0 \in \C\): for any type
  \(x : \Delta X \vdash_{\PSh(\C)} Z(x)\), we have a natural bijection
  between the set of elements
  \(x : \Delta X, z : Z(x) \vdash_{\PSh(\C)} b : \nabla_{X}Y(x)\) and
  the set of elements \(x : X, z : Z_{0}(x) \vdash_{\E} b :
  Y(x)\). Note that \((\Delta X)_{0} = X\) and thus \(Z_{0}\) is a
  type in \(\E\) over \(X\).
\end{proposition}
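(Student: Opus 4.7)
The plan is to give an explicit construction of $\nabla_X Y$ and then verify the stated bijection directly; the argument is essentially the Yoneda lemma evaluated at the terminal object $0$ of $\C$.

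I would define $\nabla_X Y$ over $\Delta X$ by setting, for each $c \in \C$ and $x \in (\Delta X)(c) = X$,
\[
  (\nabla_X Y)(c, x) := Y(x)^{\C(0, c)},
\]
with the presheaf action along $f : c' \to c$ given by precomposition with $\C(0, f) : \C(0, c') \to \C(0, c)$. Because $\Delta X$ is the constant presheaf, this is a well-defined type in $\PSh(\C)$ over $\Delta X$. Since $0$ is the terminal object of $\C$, the internal set $\C(0, 0)$ is a singleton, so $(\nabla_X Y)_0(x) = Y(x)^{1} = Y(x)$, identifying the $0$-th component of $\nabla_X Y$ with $Y$ along $(\Delta X)_0 = X$, as demanded by the counit of the claimed adjunction.

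To exhibit the bijection, in one direction I send a section $x : \Delta X, z : Z(x) \vdash b(x, z) : \nabla_X Y(x)$ to its restriction $b(0, x, z_0)(\mathrm{id}_0) : Y(x)$ at $c = 0$ evaluated on $\mathrm{id}_0 \in \C(0, 0)$. Conversely, given a section $x : X, z_0 : Z_0(x) \vdash \tilde b(x, z_0) : Y(x)$, I define
\[
  b(c, x, z)(t) := \tilde b(x, Z(t)(z))
\]
for $c \in \C$, $x \in X$, $z \in Z(c, x)$, and $t \in \C(0, c)$, where $Z(t) : Z(c, x) \to Z_0(x)$ is well typed because $(\Delta X)(t)$ is the identity on $X$. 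Naturality of $b$ in $c$ is then immediate from the functoriality of $Z$.

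To finish, I check that these constructions are mutually inverse. One round-trip is immediate from $\C(0, 0) \cong 1$, so the identity is the only choice of $t$ at $c = 0$. The other round-trip uses naturality of $b$ applied to a morphism $t : 0 \to c$, which gives $b(c, x, z)(t) = b(0, x, Z(t)(z))(\mathrm{id}_0)$, so that the reconstruction recovers $b$ exactly. Naturality of the bijection in $Z$ is a direct check from the formulas. The main obstacle is purely bookkeeping: everything must be formulated in the internal language of $\E$, since $\C$ is an internal category and we are dealing with internal presheaves, but conceptually the argument is the same as the external Yoneda lemma applied to $\yoneda(0) = 1$.
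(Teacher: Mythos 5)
Your proof is correct and takes essentially the expected route: you realise $\nabla_{X}Y$ by the right-Kan-extension formula $(\nabla_{X}Y)(c,x) = Y(x)^{\Box(0,c)}$ with restriction by precomposition, and the verification of the bijection (evaluation at $\mathrm{id}_{0}$ one way, $b(c,x,z)(t) := \tilde{b}(x, Z(t)(z))$ the other, with the round trip recovered by naturality along $t : 0 \to c$) is sound, the only remaining work being the routine internalisation in the base model since $\Box$ is an internal category. The paper itself defers this proposition to the cited cubical assemblies paper, where the codiscrete type is obtained by the same construction, so there is no real divergence in approach.
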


\begin{proposition}
  \label{prop:codiscrete-proposition}
  For a type \(x : X \vdash_{\E}Y(x)\), the type
  \(x : \Delta X \vdash_{\PSh(\C)} \nabla_{X}Y(x)\) has a
  composition structure and is a proposition in
  \(\cttmodel{\PSh(\C)}\).
\end{proposition}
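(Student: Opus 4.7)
The plan is to work entirely through the adjunction $(-)_{0} \dashv \nabla_{X}$, together with the fact that in the internal cubical model the interval $\I$ is the representable $\yoneda(1)$ and therefore satisfies $\I_{0} \cong \Bool$, while the cofibrant propositions $\Prop$ are locally decidable. All of the required structure on $\nabla_{X}Y$ will be transferred back from the type $Y$ living in $\E$ via this adjunction.

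I would begin with propositionality. To show $\nabla_{X}Y$ is a proposition in $\cttmodel{\PSh(\C)}$, it suffices to give, for any two elements $a, b$ of $\nabla_{X}Y(x)$, a path between them. A path $\I \to \nabla_{X}Y$ over $\Delta X$ corresponds under the adjunction to a map $\I_{0} \to Y$ over $X$, which, since $\I_{0} \cong \Bool$, is exactly a pair of elements of $Y$. Writing $\tilde{a}, \tilde{b}$ for the adjoints of $a, b$ in $Y$, the pair $(\tilde{a}, \tilde{b})$ transports back to a path in $\nabla_{X}Y$ from $a$ to $b$. In fact this path is unique, so the path space between any two elements is even contractible.

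Next I would construct the composition operator $\mathsf{hcomp}(\varphi, \epsilon, u)$ for $\nabla_{X}Y$ by specifying its adjoint in $Y$. Under the adjunction, a partial cap $u : \sum_{i : \I}((i = \epsilon) \vee \varphi) \to \nabla_{X}Y$ corresponds to a map $\tilde{u}$ whose domain, evaluated at $0 \in \C$, unpacks to pairs $(i, q)$ with $i \in \Bool$ and either $i = \epsilon$ or $\varphi$ true. Using local decidability of $\varphi$ in $\E$, I would case split: when $\varphi$ holds define the adjoint of $\mathsf{hcomp}(\varphi, \epsilon, u)$ to be $\tilde{u}(1 - \epsilon, p)$, and otherwise define it to be $\tilde{u}(\epsilon, \inl(\mathrm{refl}))$. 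Transporting back via the adjunction yields the required element of $\nabla_{X}Y$, and the reduction equation $\mathsf{hcomp}(\varphi, \epsilon, u) = u(1 - \epsilon, p)$ on $\varphi$ holds on the nose by construction.

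The main obstacle will be verifying that this definition is natural, in the sense of being stable under reindexing along morphisms of $\Ctx^{\PSh(\C)}$, and that it satisfies the full suite of coherences expected of an Orton--Pitts composition structure rather than merely the reduction equation singled out above. Both reduce to checking that the case split on $\varphi$ commutes with restriction, which follows from local decidability of $\Prop$ together with functoriality of the $(-)_{0} \dashv \nabla_{X}$ adjunction. An abstract shortcut is available: once propositionality is known in a strong enough form, one can appeal to the general principle that a codiscrete type in an Orton--Pitts model is automatically fibrant, but the direct construction above is self-contained and is the route I would take.
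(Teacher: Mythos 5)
The paper itself does not prove this proposition: it is recalled from \cite{cubicalassemblies}. Your route---transposing everything along the adjunction \((-)_{0} \dashv \nabla_{X}\), using that \(\I\) evaluated at the object \(0\) of the cube category is the two-element set \(\Bool\), so that a path in \(\nabla_{X}Y\) is freely and uniquely determined by an arbitrary pair of endpoints, and using decidability of the stage-\(0\) part of a cofibration to solve composition problems by a case split---is the natural reconstruction of the intended argument, and the propositionality half is correct as you state it. The decidability you need is exactly what taking \(\Prop\) to be the locally decidable propositions provides, and the reduction equation on \(\varphi\) does hold because restriction to \([\varphi]\) commutes with transposition and lands in the branch of your case split where the stage-\(0\) proposition is true.

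Two points need attention. First, what you construct is only a fibrewise homogeneous composition operator, whereas a composition structure in the Orton--Pitts sense for the family \(x : \Delta X \vdash \nabla_{X}Y(x)\) quantifies over paths \(\I \to \Delta X\) in the base, not just over individual fibres. Here this is harmless because \(\Delta X\) is discrete (Proposition \ref{prop:constant-presheaf-discrete}), so every base path is constant and the general composition problem reduces to the homogeneous one you treat; but that reduction (or, alternatively, running the same transposition argument for a composition problem over an arbitrary base path, which works just as well) has to be stated---as written your operator does not yet have the type demanded by the definition of a fibration structure. Second, the closing ``abstract shortcut'' is not available: there is no principle in this setting to the effect that a codiscrete, path-connected type is automatically fibrant. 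Fibrancy is precisely what is being proved, and propositionality alone---even with unique connecting paths---does not produce a composition structure without an ingredient like the stage-\(0\) decidability you invoke (one must produce a total element agreeing strictly with the tube on \(\varphi\), and without a complemented \([\varphi]_{0}\) there is no way to merge tube and cap). So the direct construction is not merely the route you ``would take''; it is the one that is needed, and the deferred naturality and reindexing checks are indeed routine because both the adjunction and the case split are natural in the context.
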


\begin{proof}[Proof of Theorem \ref{thm:churchs-thesis-fails-in-presheaves}]
  We define types
  \(f : \N \to \N \vdash C'(f) :\equiv \sum_{e : \N}\prod_{x :
    \N}\sum_{z : \N}T(e, x, z) \times U(x) = f(x)\) and
  \(f : \N \to \N \vdash C(f) :\equiv \trunc{C'(f)}\). Let \(N\)
  denote the natural number object in \(\E\). Then Church's Thesis is
  interpreted in \(\cttmodel{\PSh(\C)}\) as
  \(\prod_{f : \Delta(N \to N)}\itpr{C}^{\cttmodel{\PSh(\C)}}(f)\) by
  Proposition \ref{prop:constant-presheaf-preservation}. We will
  construct two functions in \(\cttmodel{\PSh(\C)}\):
  \begin{itemize}
  \item
    \(\prod_{f : \Delta(N \to N)}\itpr{C}^{\cttmodel{\PSh(\C)}}(f) \to
    \nabla_{N \to N}\itpr{C'}^{\E}(f)\);
  \item
    \(\left(\prod_{f : \Delta(N \to N)}\nabla_{N \to
        N}\itpr{C'}^{\E}(f)\right) \to \Initial\).
  \end{itemize}
  Then we readily get a function
  \(\left(\prod_{f : \Delta(N \to
      N)}\itpr{C}^{\cttmodel{\PSh(\C)}}(f)\right) \to \Initial\).

  For the former one it suffices to give a function
  \(\itpr{C'}^{\cttmodel{\PSh(\C)}}(f) \to \nabla_{N \to
    N}\itpr{C'}^{\E}(f)\) for all \(f : \Delta(N \to N)\) by the
  recursion principle of the propositional truncation because the
  codomain is a proposition by Proposition
  \ref{prop:codiscrete-proposition}. By the adjunction
  \((-)_{0} \adj \nabla_{N \to N}\) it suffices to give a function
  \(\itpr{C'}^{\cttmodel{\PSh(\C)}}_{0} \to \itpr{C'}^{\E}\) but we
  have an isomorphism
  \(\itpr{C'}^{\cttmodel{\PSh(\C)}} \cong (\Delta\itpr{C'}^{\E})_{0} =
  \itpr{C'}^{\E}\) by Proposition
  \ref{prop:constant-presheaf-preservation}.

  For the latter function, observe that
  \(\prod_{f : \Delta(N \to N)}\nabla_{N \to N}\itpr{C'}^{\E}(f) \cong
  \nabla_{1}\left(\prod_{f : N \to N}\itpr{C'}^{\E}(f)\right)\) and
  that \(\nabla_{1}\Initial \cong \Initial\). Then we apply
  \(\nabla_{1}\) to the function
  \(\left(\prod_{f : N \to N}\itpr{C'}^{\E}(f)\right) \to \Initial\)
  in \(\E\) obtained from the inconsistency of Church's Thesis with
  the axiom of choice and function extensionality.
\end{proof}
\endgroup

\section{Null Types}
\label{sec:modalities}
\begingroup
\newcommand{\E}{\mathcal{E}}
\newcommand{\action}{\cdot}
\newcommand{\isequiv}{\mathsf{isEquiv}}
\newcommand{\isnull}{\mathsf{isNull}}

Let \(\E\) be a model of univalent type theory. Based on Rijke, Shulman
and Spitters' null types \cite{rijkeshulmanspitters} we define a
notion of null structure as follows.

Let \(a : A \vdash
B(a)\) be a proposition in \(\E\). For a type \(\Gamma \vdash X\) in
\(\E\), we define a proposition \(\Gamma \vdash \isnull_{B}(X)\) as
\[
  \Gamma \vdash \prod_{a : A}\isequiv(\lambda(x : X).\lambda(b : B(a)).x)
\]
and call a term of \(\isnull_{B}(X)\) a \emph{\(B\)-null structure on
  \(X\)}.
A \emph{\(B\)-null type} is a type \(\Gamma \vdash X\) equipped with a
\(B\)-null structure \(n\) on \(X\). That is, a \(B\)-null type has a
witness that the canonical map \(X \to X^{B(a)}\) is an equivalence
for each \(a\).

\begin{definition}
  We define a cwf \(\E_{B}\) as follows:
  \begin{itemize}
  \item the contexts are those of \(\E\);
  \item the types are the \(B\)-null types in \(\E\);
  \item the elements of \(\Gamma \vdash_{\E_{B}} X\) are those of the
    underlying type \(X\) in \(\E\).
  \end{itemize}
  We have the obvious forgetful morphism \(\E_{B} \to \E\) of cwf's.
\end{definition}

For a proposition \(a : A \vdash B(a)\), a \emph{nullification
  operator} assigns
\begin{itemize}
\item each type \(\Gamma \vdash X\) a \(B\)-null type
  \(\Gamma \vdash \loc_{B}X\) and an element
  \(\Gamma \vdash \eta_{X} : X \to \loc_{B}X\); and
\item each pair of type \(\Gamma \vdash X\) and \(B\)-null type
  \(\Gamma \vdash Y\) an element
  \(\Gamma \vdash e : \isequiv(\lambda(f : \loc_{B}X \to Y).f \circ
  \eta_{X}\).
\end{itemize}
We also require that a nullification operator is preserved by
reindexing.

We review some properties of null types. See
\cite{rijkeshulmanspitters} for further details.

\begin{proposition}
  \label{prop:null-closure}
  Let \(\Gamma \vdash X\) and \(\Gamma, x : X \vdash Y(x)\) be types
  in \(\E\).
  \begin{itemize}
    \item There exists a term of type \(\Gamma \vdash (\prod_{x :
        X}\isnull_{B}(Y(x))) \to \isnull_{B}(\prod_{x : X}Y(x))\).
    \item There exists a term of type \(\Gamma \vdash \isnull_{B}(X)
      \to (\prod_{x : X}\isnull_{B}(Y(x))) \to \isnull_{B}(\sum_{x :
        X}Y(x))\).
    \item There exists a term of type \(\Gamma \vdash \isnull_{B}(X)
      \to \prod_{x_{0}, x_{1} : X}\isnull_{B}(\Id_{X}(x_{0},
      x_{1}))\).
  \end{itemize}
  Consequently, \(\E_{B}\) supports dependent product, dependent sum
  and intensional identity types preserved by the morphism
  \(\E_{B} \to \E\).
\end{proposition}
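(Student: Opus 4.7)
The plan is to verify each closure property by exhibiting an explicit equivalence whose composite with the canonical constant map $Z \to Z^{B(a)}$ is witnessed to be the identity, working internally in $\E$, which is a model of univalent type theory and so in particular has function extensionality. Stability under reindexing is then immediate because every construction lives in the internal language.

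For the dependent product case, the key is the chain of equivalences
\[
  \left(\prod_{x : X} Y(x)\right)^{B(a)} \simeq \prod_{x : X} Y(x)^{B(a)} \simeq \prod_{x : X} Y(x).
\]
The first step is the usual swap of $\prod$'s by function extensionality and requires no hypothesis; the second is a pointwise application of the hypothesis $\isnull_{B}(Y(x))$, using that $\prod$ preserves fiberwise equivalences. A direct unfolding shows the composite is indeed the canonical constant map $\lambda g.\lambda b.g$, which therefore has an inverse.

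For the dependent sum case, I start from the $\Pi$--$\Sigma$ distributivity equivalence
\[
  \left(B(a) \to \sum_{x : X} Y(x)\right) \;\simeq\; \sum_{f : B(a) \to X} \prod_{b : B(a)} Y(f(b)).
\]
Nullness of $X$ identifies $B(a) \to X$ with $X$ via the constant map, so a pair $(f, g)$ in the right-hand side corresponds to an $x : X$ together with a witness that $f$ is constantly $x$; transporting $g$ along this witness yields an element of $\prod_{b : B(a)} Y(x)$, which by $\isnull_{B}(Y(x))$ is equivalent to $Y(x)$. Composing these equivalences gives $\left(\sum_{x : X} Y(x)\right)^{B(a)} \simeq \sum_{x : X} Y(x)$, and again one verifies the composite is the constant map. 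For identity types, since the canonical constant map $c : X \to X^{B(a)}$ is an equivalence its action on paths is an equivalence $\Id_{X}(x_{0}, x_{1}) \simeq \Id_{X^{B(a)}}(\lambda b.x_{0}, \lambda b.x_{1})$, and the target is equivalent by function extensionality to $\Id_{X}(x_{0}, x_{1})^{B(a)}$; the composite is once more the constant map.

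The ``consequently'' clause is then essentially formal: given a context $\Gamma$ and $B$-null types $\Gamma \vdash_{\E_{B}} X$ and $\Gamma, x : X \vdash_{\E_{B}} Y(x)$, the three items above provide canonical $B$-null structures on $\prod_{x : X} Y(x)$, $\sum_{x : X} Y(x)$, and each $\Id_{X}(x_{0}, x_{1})$, yielding the type formers in $\E_{B}$; strict preservation by the forgetful morphism $\E_{B} \to \E$ holds by definition since both cwf's share the same underlying contexts and elements. The main point requiring genuine care is checking in each case that the assembled equivalence really is the canonical constant map, rather than merely \emph{some} equivalence between the two sides; this bookkeeping is where the proof has real content, while the rest reduces to standard manipulations of $\prod$, $\sum$, and path spaces in univalent type theory.
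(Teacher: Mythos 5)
Your proof is correct and is essentially the standard argument for closure of null types under $\prod$, $\sum$ and $\Id$ (the flip-of-$\prod$'s equivalence, the $\Pi$--$\Sigma$ distributivity plus reindexing along the base equivalence, and $\mathrm{ap}$ of the constant map combined with function extensionality, each time checking the composite is the canonical map); the paper gives no proof of its own but defers to Rijke--Shulman--Spitters, where exactly this style of argument appears. Your handling of the ``consequently'' clause and of stability under reindexing via the internal language also matches the paper's framework.
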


For a universe \(U\) we define a subuniverse \(U_{B}\) of \(U\) as
\[
  U_{B} \equiv \{X : U \mid \isnull_{B}(X)\}.
\]

\begin{proposition}
  \label{prop:nullification-universe}
  The universe \(U_{B}\) has a \(B\)-null structure.
\end{proposition}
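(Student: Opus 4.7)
The plan is to construct, for each $a : A$, an inverse to the canonical map $c_a : U_B \to U_B^{B(a)}$ sending $X \mapsto \lambda(b : B(a)).X$, following the standard argument of Rijke, Shulman and Spitters. The candidate inverse is $d_a : U_B^{B(a)} \to U_B$ defined by $d_a(f) := \prod_{b : B(a)} f(b)$. This lands in $U_B$ because each $f(b)$ comes with a $B$-null structure and Proposition \ref{prop:null-closure} (first bullet) gives a $B$-null structure on the product; since $\isnull_B$ is a proposition, the resulting element of $U_B$ depends only on the underlying type.

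For the round-trip $d_a \circ c_a \sim \id_{U_B}$, take $X : U_B$ with its null structure $n_X$. Then $d_a(c_a(X))$ is, as a type, $X^{B(a)}$. But $n_X$ is by definition a witness that the map $\lambda x.\lambda b.x : X \to X^{B(a)}$ is an equivalence, so by univalence $X = X^{B(a)}$ in $U$. Since $\isnull_B$ is a proposition, this equality lifts canonically to an equality in $U_B$.

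For the other round-trip $c_a \circ d_a \sim \id$, fix $f : B(a) \to U_B$. By function extensionality (available from univalence via \cite[Theorem 4.9.4]{hottbook}) it suffices to show, for each $b : B(a)$, that $\prod_{b' : B(a)} f(b') = f(b)$ in $U_B$. Here I use the crucial hypothesis that $B(a)$ is a proposition: once we have $b : B(a)$, the type $B(a)$ is contractible with centre $b$, so evaluation at $b$ gives an equivalence $\prod_{b' : B(a)} f(b') \simeq f(b)$. Univalence converts this to an equality in $U$, and again propositionality of $\isnull_B$ lifts it to $U_B$.

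The main obstacle is bookkeeping rather than genuine mathematical difficulty: one has to be careful that the two $B$-null structures attached to the same underlying type actually coincide (which is free from the fact that $\isnull_B$ is a proposition), and that the univalence axiom is applied at the right universe level so that everything stays inside $U_B$. Once those bureaucratic points are dispatched, the equivalence $c_a$ is witnessed by $(d_a, \text{the two homotopies above})$, and assembling these for all $a : A$ yields the desired term of $\isnull_B(U_B)$, which is preserved by reindexing because each ingredient is.
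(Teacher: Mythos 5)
Your proof is correct and is essentially the paper's own approach: the paper's proof is just a citation of Rijke--Shulman--Spitters (their Corollary 3.11 and Theorem 3.12) for the fact that the universe of modal types of a topological modality is modal, and your explicit quasi-inverse \(f \mapsto \prod_{b : B(a)} f(b)\) --- with the two homotopies obtained from the null structure on \(X\), the contractibility of the inhabited proposition \(B(a)\), univalence, and the propositionality of the null-structure predicate --- is exactly the standard proof of that cited result. So this is the same argument, written out in full rather than outsourced to the reference.
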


\begin{proof}
  Our condition that each \(B(a)\) is a proposition corresponds to
  Rijke, Shulman and Spitters' notion of \emph{topological
    modality}. They prove in \cite[Corollary 3.11 and Theorem
  3.12]{rijkeshulmanspitters} that for any such modality the universe
  of modal types is itself modal.
\end{proof}

\begin{proposition}
  \label{prop:nullification-preserves-prop}
  If a nullification operator \(\loc_{B}\) exists, then it preserves
  propositions.
\end{proposition}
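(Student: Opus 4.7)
The plan is to construct, for any proposition $P$, a map $\loc_{B}P \to \mathsf{isProp}(\loc_{B}P)$, writing $\mathsf{isProp}(X) :\equiv \prod_{x, y : X}\Id_{X}(x, y)$. Such a map suffices: given $x, y : \loc_{B}P$, applying it first to $x$ and then to the pair $(x, y)$ yields an identification $x = y$, so $\loc_{B}P$ is a proposition.

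The first step is to observe that $\mathsf{isProp}(\loc_{B}P)$ is itself $B$-null. Since $\loc_{B}P$ is $B$-null by construction, the third bullet of Proposition \ref{prop:null-closure} shows that each identity type $\Id_{\loc_{B}P}(x, y)$ is $B$-null, and then the first bullet shows that the $\Pi$-type $\prod_{x, y : \loc_{B}P}\Id_{\loc_{B}P}(x, y)$ is $B$-null. Consequently, by the universal property of $\loc_{B}$ applied to the codomain $\mathsf{isProp}(\loc_{B}P)$, it suffices to produce a map $P \to \mathsf{isProp}(\loc_{B}P)$.

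For the second step, I would construct this map by assuming $p : P$ and deriving that $\loc_{B}P$ is contractible (hence a proposition). Given $p$, the hypothesis that $P$ is a proposition makes $P$ contractible with centre $p$, so the unique map $P \to \Terminal$ is an equivalence. Since $\Terminal$ is trivially $B$-null (the map $\Terminal \to \Terminal^{B(a)}$ is an equivalence for any $a : A$), we have $\loc_{B}\Terminal \simeq \Terminal$, and the functoriality of $\loc_{B}$ on equivalences then yields $\loc_{B}P \simeq \Terminal$. In particular $\loc_{B}P$ is contractible and a fortiori a proposition.

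The main step requiring a little care is verifying functoriality of $\loc_{B}$ on equivalences and the isomorphism $\loc_{B}\Terminal \simeq \Terminal$, but both are standard consequences of the universal property defining a nullification operator: $\loc_{B}\Terminal$ satisfies the same universal property as $\Terminal$, and an equivalence $X \simeq Y$ between arbitrary types induces an equivalence $\loc_{B}X \simeq \loc_{B}Y$ by applying the universal property twice and checking the triangle identities. Both can be derived entirely in the internal language of $\E$, so the argument goes through in any model of univalent type theory equipped with a nullification operator.
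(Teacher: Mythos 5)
Your proof is correct, but it is not the route the paper takes: the paper disposes of this proposition in one line by appealing to Rijke--Shulman--Spitters (their Lemma 1.28, that any modality preserves propositions), using that nullification at a family of propositions is a modality in their sense. You instead give a self-contained argument from exactly the structure stated in this paper: the universal property of \(\loc_{B}\) (precomposition with \(\eta_{X}\) is an equivalence into any \(B\)-null type) together with Proposition \ref{prop:null-closure}. Your key lemma is the standard ``inhabited implies proposition'' trick, i.e.\ a map \(X \to \prod_{x,y:X}\Id_X(x,y)\) yields \(\mathsf{isProp}(X)\); combined with the observation that \(\prod_{x,y:\loc_{B}P}\Id(x,y)\) is \(B\)-null (identity types of a null type are null, and \(\Pi\)-types with null fibres are null), this reduces everything to producing \(P \to \mathsf{isProp}(\loc_{B}P)\), which you get by noting that an inhabited proposition is contractible, contractible types nullify to something contractible (via \(\loc_{B}\Terminal \simeq \Terminal\) and functoriality of \(\loc_{B}\) on equivalences, both routine consequences of the universal property, as you indicate). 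What your approach buys is independence from the external reference and from the translation between the paper's cwf-level, strictly reindexed nullification operator and an internally defined modality; what the paper's citation buys is brevity and a more general statement (any modality, not just nullification). One could streamline your second step slightly: given \(p:P\), the proposition \(P\) is contractible, hence itself \(B\)-null, so \(\eta_{P}\) is an equivalence and \(\loc_{B}P\) is contractible directly, avoiding the detour through \(\loc_{B}\Terminal\); but your version is equally valid.
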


\begin{proof}
  This is true for any modality by \cite[Lemma
  1.28]{rijkeshulmanspitters}.
\end{proof}

\begin{corollary}
  \label{cor:nullification-contractible}
  Suppose that \(\E\) has a nullification operator \(\loc_{B}\). Then
  \(a : A \vdash \loc_{B}B(a)\) is contractible.
\end{corollary}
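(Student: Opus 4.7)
The plan is to show that $\loc_B B(a)$ is (i) a proposition and (ii) inhabited, which together give contractibility.

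First, by Proposition \ref{prop:nullification-preserves-prop}, since $B(a)$ is a proposition and $\loc_B$ preserves propositions, $\loc_B B(a)$ is a proposition. So it suffices to construct an element of $\loc_B B(a)$ in the context $a : A$.

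Next, I would use the $B$-null structure on $\loc_B B(a)$ itself, which is part of the data of the nullification operator. Concretely, applying the $B$-null structure at the point $a : A$ tells us that the constant-function map
\[
  c : \loc_B B(a) \longrightarrow \bigl(\loc_B B(a)\bigr)^{B(a)}, \qquad c(x) \;=\; \lambda(b : B(a)).\,x
\]
is an equivalence. On the other hand, we already have a non-trivial map $B(a) \to \loc_B B(a)$, namely the unit $\eta_{B(a)}$. Applying the inverse of $c$ to $\eta_{B(a)}$ yields the desired element $a : A \vdash c^{-1}(\eta_{B(a)}) : \loc_B B(a)$.

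Combining the two steps: we have an inhabited proposition, hence a contractible type. The main thing to check is that $\loc_B B(a)$ being a proposition is indeed a direct instance of Proposition \ref{prop:nullification-preserves-prop} rather than requiring any additional argument, and that using the diagonal case $a' = a$ in the null-structure quantifier $\prod_{a' : A}\isequiv(\ldots)$ is legitimate. Both are unproblematic here, so I do not anticipate a real obstacle; the corollary is essentially a one-line consequence of the two cited propositions together with the definition of a nullification operator.
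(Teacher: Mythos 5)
Your proposal is correct and matches the paper's proof essentially verbatim: the paper also cites Proposition \ref{prop:nullification-preserves-prop} to reduce contractibility to inhabitation, and then inverts the $B$-null equivalence at the diagonal instance $a$ to produce an element from the unit $\eta_{B(a)} : B(a) \to \loc_{B}B(a)$. No gaps; nothing further needed.
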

\begin{proof}
  Since \(\loc_{B}B(a)\) is a proposition, it suffices to find an
  element of \(\prod_{a : A}\loc_{B}B(a)\). Assume that \(a : A\) is
  given. Since \(\loc_{B}B(a)\) is \(B\)-null, it is enough to give a
  function \(B(a) \to \loc_{B}B(a)\), so take the constructor
  \(\eta_{B(a)} : B(a) \to \loc_{B}B(a)\).
\end{proof}

\begin{corollary}
  \label{cor:truncation-of-null-types}
  Suppose that \(\E\) has a nullification operator \(\loc_{B}\). Then
  \(X \mapsto \loc_{B}\trunc{X}\) gives propositional truncation in
  the model \(\E_{B}\).
\end{corollary}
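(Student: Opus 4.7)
The plan is to verify the four ingredients of propositional truncation in the cwf $\E_{B}$: (i) $\loc_{B}\trunc{X}$ is a type in $\E_{B}$, i.e.\ carries a $B$-null structure; (ii) it is a proposition in $\E_{B}$; (iii) there is a unit term $X \to \loc_{B}\trunc{X}$ in $\E_{B}$; and (iv) for every $B$-null proposition $\Gamma \vdash_{\E_{B}} Y$ and every $f : X \to Y$ there is a factorisation through $\loc_{B}\trunc{X}$. I would also check that every construction is stable under reindexing, which is immediate since both $\trunc{-}$ in $\E$ and $\loc_{B}$ are stable by assumption.

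For (i), the type $\loc_{B}\trunc{X}$ is $B$-null by the very definition of the nullification operator. For (ii), I would use Proposition \ref{prop:nullification-preserves-prop}: since $\trunc{X}$ is a proposition in $\E$, so is $\loc_{B}\trunc{X}$. The subtle step that I expect to require the most care is comparing ``proposition'' in $\E$ with ``proposition'' in $\E_{B}$, since being a proposition is a statement about identity types. Here I would appeal to Proposition \ref{prop:null-closure}, which says that identity types of $B$-null types are $B$-null, together with the fact that the forgetful morphism $\E_{B} \to \E$ preserves identity types; this gives that the identity types computed in $\E_{B}$ coincide with those computed in $\E$, so any proposition in $\E$ whose underlying type is $B$-null is automatically a proposition in $\E_{B}$.

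For (iii), I would take the unit to be the composition $\eta_{\trunc{X}} \circ |\cdot|_{X} : X \to \trunc{X} \to \loc_{B}\trunc{X}$, both factors being available in $\E$ and hence in $\E_{B}$. For (iv), given $f : X \to Y$ with $Y$ a $B$-null proposition, I would first use the recursion principle of $\trunc{-}$ in $\E$ (valid because $Y$ is a proposition in $\E$ by step (ii)) to extend $f$ to $\bar{f} : \trunc{X} \to Y$, and then use the universal property of $\loc_{B}$ (valid because $Y$ is $B$-null) to obtain a further extension $\tilde{f} : \loc_{B}\trunc{X} \to Y$. Uniqueness (and hence the computation rule up to the propositional equality available in $\E_{B}$) is automatic because $Y$ is a proposition in $\E_{B}$ by (ii), so any two lifts agree.
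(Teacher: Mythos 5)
Your proposal is correct and follows essentially the same route as the paper: the paper cites Proposition \ref{prop:nullification-preserves-prop} for the proposition part and packages your steps (iii)--(iv) into the single chain of equivalences \((\loc_{B}\trunc{X} \to Z) \simeq (\trunc{X} \to Z) \simeq (X \to Z)\), i.e.\ the universal property of nullification composed with truncation recursion, exactly as you do. Your extra care about ``proposition in \(\E\)'' versus ``proposition in \(\E_{B}\)'' and about reindexing is a reasonable spelling-out of what the paper leaves implicit.
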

\begin{proof}
  By Proposition \ref{prop:nullification-preserves-prop},
  \(\loc_{B}\trunc{X}\) is a proposition. For any \(B\)-null proposition
  \(Z\), we have equivalences
  \begin{align*}
    (\loc_{B}\trunc{X} \to Z) &\simeq (\trunc{X} \to Z) \\
                          &\simeq (X \to Z).
  \end{align*}
\end{proof}

\subsection{Null Types in Orton-Pitts Models}
\label{sec:null-types-in-orton-pitts}
\begingroup

Let \(\cttmodel{\E}\) be an Orton-Pitts model.

\begin{definition}
  A type \(a : A \vdash B(a)\) in \(\E\) or \(\cttmodel{\E}\) is said
  to be \emph{well-supported} if the propositional truncation
  \(a : A \vdash_{\E} \trunc{B(a)}\) taken in the model \(\E\) of
  extensional dependent type theory is inhabited.
\end{definition}

\begin{proposition}
  \label{prop:discrete-null}
  Let \(1 \vdash_{\E} X\) be a type and
  \(a : A \vdash_{\cttmodel{\E}} B(a)\) a proposition. If \(X\) is
  discrete and \(B\) is well-supported, then \(X\) has a \(B\)-null
  structure.
\end{proposition}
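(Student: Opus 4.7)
The plan is to produce, for each $a : A$, an explicit two-sided inverse to the canonical map $c_{a} := \lambda x.\lambda b.x : X \to X^{B(a)}$. This gives an isomorphism in $\E$, which is \emph{a fortiori} an equivalence in $\cttmodel{\E}$, and so witnesses the required $B$-null structure on $X$.

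First I would show that every $f : B(a) \to X$ is pointwise constant, in the sense that $f(b) = f(b')$ strictly in $\E$ for all $b, b' : B(a)$. This is the step where the two hypotheses combine. Since $B(a)$ is a proposition (interpreted either in $\E$ directly or in $\cttmodel{\E}$) there is a path $\gamma : \I \to B(a)$ with $\gamma(\Izero) = b$ and $\gamma(\Ione) = b'$. Post-composing gives $f \circ \gamma : \I \to X$, and discreteness of $X$ says that $X \to X^{\I}$ is an iso, so $f \circ \gamma$ must be of the form $\lambda i.x$ for some $x : X$; evaluating at $\Izero$ and $\Ione$ then yields $f(b) = f(b')$ as an equality in $\E$.

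Next I would build the inverse $e_{a} : X^{B(a)} \to X$. Pointwise constancy is exactly the condition under which, in the extensional model $\E$, the map $f$ factors uniquely through the propositional truncation as $\tilde{f} : \trunc{B(a)} \to X$. Well-supportedness supplies a term $a : A \vdash p(a) : \trunc{B(a)}$ in $\E$, so I may set $e_{a}(f) := \tilde{f}(p(a))$. The identity $e_{a} \circ c_{a} = \mathrm{id}_{X}$ is immediate because $c_{a}(x)$ is the constant function with value $x$. For $c_{a} \circ e_{a} = \mathrm{id}_{X^{B(a)}}$ note that $e_{a}(f) = f(b)$ for any actual $b : B(a)$ (since $\tilde{f}$ agrees with $f$ on the image), so $\lambda b. e_{a}(f) = \lambda b. f(b) = f$. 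The construction is manifestly uniform in $a : A$ and stable under reindexing.

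The net effect is an isomorphism $X \cong X^{B(a)}$ in $\E$; because $\cttmodel{\E}$ shares underlying arrows with $\E$, this is automatically an equivalence in $\cttmodel{\E}$, so it produces a term of $\prod_{a : A}\isequiv(c_{a})$, i.e.\ a $B$-null structure on $X$. There is no single hard step; the main subtlety is keeping track of which model each ingredient lives in: the path used to witness constancy lives in $\cttmodel{\E}$, the factorisation through $\trunc{B(a)}$ and the extraction of $p(a)$ take place in the extensional model $\E$, and the final equivalence is read back in $\cttmodel{\E}$.
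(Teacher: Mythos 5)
Your proof is correct and takes essentially the same route as the paper's: the central step---that any \(f : B(a) \to X\) is constant, obtained by composing \(f\) with a path in \(B(a)\) (coming from \(B\) being a proposition in \(\cttmodel{\mathcal{E}}\)) and applying discreteness of \(X\)---is exactly the paper's argument, with well-supportedness supplying the needed point. The only difference is cosmetic: you package the conclusion as an explicit inverse by factoring \(f\) through \(\trunc{B(a)}\) in \(\mathcal{E}\) and evaluating at the global inhabitant, whereas the paper proves injectivity and surjectivity of \(\lambda x.\lambda b.x\) directly in the internal language of \(\mathcal{E}\); both yield an isomorphism in \(\mathcal{E}\) and hence the required equivalence in \(\cttmodel{\mathcal{E}}\).
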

\begin{proof}
  We show that, for any \(a : A\), the function \(k_{a} :\equiv
  \lambda x.\lambda b.x : X \to (B(a) \to X)\) is an isomorphism in
  the internal language of \(\E\). Since \(B\) is well-supported,
  \(k_{a}\) is injective. To prove surjectivity we assume that \(f
  : B(a) \to X\) is given. By the well-supportedness of \(B\) there
  exists some element \(b : B(a)\). We show that \(f =
  k_{a}(f(b))\). Assume \(b' : B(a)\) is given. Since \(B\) is a
  proposition in \(\cttmodel{E}\) we have a path \(p : \I \to B(a)\)
  such that \(p\Izero = b'\) and \(p\Ione = b\). By the discreteness
  of \(X\) the path \(f \circ p : \I \to X\) is constant, which
  implies that \(f(b') = f(b)\). Hence we have \(f = k_{a}(f(b))\) by
  function extensionality.
\end{proof}

We easily deduce the following corollaries.
\begin{corollary}
  \label{cor:botnull}
  If \(B\) is well-supported, then \(\Initial\) has a \(B\)-null
  structure.
\end{corollary}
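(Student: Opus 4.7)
The plan is to apply Proposition \ref{prop:discrete-null} with $X$ instantiated as the initial type $1 \vdash_{\E} \Initial$. The second hypothesis, well-supportedness of $B$, is given by assumption, so the only remaining content is to verify that $\Initial$ is discrete as a type over the terminal object.

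For discreteness I would appeal to Proposition \ref{prop:decidable-discrete}, which reduces the question to exhibiting decidable equality on $\Initial$. The initial type has decidable equality vacuously: to produce a term of $\prod_{x, y : \Initial}\Id_{\Initial}(x, y) \vee \neg\, \Id_{\Initial}(x, y)$ one eliminates out of $\Initial$ in either variable, leaving no cases to check. Composing Propositions \ref{prop:decidable-discrete} and \ref{prop:discrete-null} then yields the required $B$-null structure on $\Initial$.

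I expect no real obstacle here; the corollary is essentially a one-step composition of the two preceding results. As a sanity check one can unfold the definitions directly: a $B$-null structure on $\Initial$ asks that the map $\lambda x.\lambda b.x : \Initial \to (B(a) \to \Initial)$ be an equivalence for each $a : A$. Since $B$ is well-supported, $B(a)$ is inhabited in $\E$, so the function space $B(a) \to \Initial$ is also empty, and any map between two empty types is trivially an equivalence.
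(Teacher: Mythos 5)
Your proof is correct and follows the route the paper intends: the corollary is deduced from Proposition \ref{prop:discrete-null} once one notes that \(\Initial\) is discrete, which you justify (as the paper would) via the vacuous decidable equality on \(\Initial\) and Proposition \ref{prop:decidable-discrete}. The direct unfolding in your final paragraph is a harmless extra confirmation, not a different method.
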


\begin{corollary}
  \label{cor:natnull}
  If \(B\) is well-supported, then \(\N\) has a \(B\)-null structure.
\end{corollary}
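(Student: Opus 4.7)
The plan is to deduce this immediately from the preceding Proposition \ref{prop:discrete-null} by verifying that the natural number object \(\N\) meets its hypotheses. Proposition \ref{prop:discrete-null} tells us that whenever a type \(1 \vdash_{\E} X\) is discrete and \(B\) is well-supported, \(X\) acquires a \(B\)-null structure. Since well-supportedness of \(B\) is exactly the hypothesis of the corollary, the only thing to check is that \(\N\) is discrete.

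First I would invoke Corollary \ref{cor:nno-discrete}, which states that the natural number object in \(\cat{E}\) is discrete. (This in turn follows from Proposition \ref{prop:decidable-discrete} and the fact that \(\N\) has decidable equality, but those facts are already packaged in Corollary \ref{cor:nno-discrete} so no further work is needed here.) Once discreteness is in hand, I would apply Proposition \ref{prop:discrete-null} with \(X :\equiv \N\), yielding the desired \(B\)-null structure.

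There is no real obstacle: the statement is essentially the specialisation of Proposition \ref{prop:discrete-null} to the discrete type exhibited in Corollary \ref{cor:nno-discrete}. The only thing to be careful about is that the proposition \(B\) is taken in the cubical model \(\cttmodel{\E}\) while discreteness of \(\N\) is asserted in \(\cat{E}\); but Proposition \ref{prop:discrete-null} is stated precisely so as to mix these two settings (propositionhood in \(\cttmodel{\E}\), discreteness in \(\cat{E}\)), so the application is direct. The entire proof therefore reduces to a single citation of Proposition \ref{prop:discrete-null} together with Corollary \ref{cor:nno-discrete}.
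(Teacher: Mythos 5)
Your proposal is correct and matches the paper's intended argument: the corollary is stated immediately after Proposition \ref{prop:discrete-null} with the remark that it is easily deduced, and the deduction is exactly your combination of Corollary \ref{cor:nno-discrete} (discreteness of \(\N\)) with Proposition \ref{prop:discrete-null}. Nothing further is needed.
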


\begin{proposition}
  \label{prop:coproduct-null}
  Let \(a : A \vdash_{\cttmodel{\E}} B(a)\) be a proposition and
  \(\Gamma \vdash_{\cttmodel{\E}} X\) and
  \(\Gamma \vdash_{\cttmodel{\E}} Y\) types. Then there exists a term
  of type
  \[
    \Gamma \vdash \isnull_{B}(X) \to \isnull_{B}(Y) \to \isnull_{B}(X
    + Y).
  \]
\end{proposition}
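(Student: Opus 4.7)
The plan is to unfold $\isnull_{B}$ and, for each $a : A$, construct an inverse to the canonical map $k_{a} := \lambda z.\lambda b.z \colon X + Y \to (X + Y)^{B(a)}$, using the given witnesses that the analogous maps for $X$ and $Y$ are equivalences. Write $B := B(a)$. The guiding idea is that a function $f \colon B \to X + Y$ out of a proposition is forced to land entirely in a single summand, so we first decide which summand via a $\Bool$-valued characteristic function, then invoke the null structure on that summand.

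The key auxiliary fact is that $\Bool$ carries a $B$-null structure: since $\Bool$ has decidable equality it is discrete by Proposition \ref{prop:decidable-discrete}, and Proposition \ref{prop:discrete-null} then applies under the standing well-supportedness assumption on $B$ in this subsection. Given $f \colon B \to X + Y$, define $\chi_{f} \colon B \to \Bool$ by $\chi_{f}(b) := 0$ when $f(b) = \inl(x)$ and $\chi_{f}(b) := 1$ when $f(b) = \inr(y)$. By the null structure on $\Bool$ this function corresponds to a unique $\epsilon_{f} : \Bool$. When $\epsilon_{f} = 0$ one extracts a factorisation $f = \inl \circ g$ with $g \colon B \to X$; invoking the null structure on $X$ yields a unique $x : X$ with $g = \lambda b.x$, and we define the inverse to send $f$ to $\inl(x)$. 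The case $\epsilon_{f} = 1$ is symmetric and produces $\inr(y)$. Routine case analysis on the coproduct, combined with the unit and counit data of the equivalences witnessing $\isnull_{B}(X)$ and $\isnull_{B}(Y)$, then confirms that this assignment is a two-sided inverse to $k_{a}$.

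The main obstacle I anticipate is upgrading the pointwise information ``$\chi_{f}(b) = 0$ for all $b$'' to a strict factorisation $f = \inl \circ g$; this step uses the equivalence $\Bool \simeq \Bool^{B}$ from the null structure on $\Bool$ together with disjointness of the coproduct to turn the $\Bool$-valued test into a choice of summand that is uniform across all of $B$. Without well-supportedness of $B$ this auxiliary fact fails (were some $B(a)$ empty, then $\isnull_{B}(X)$ and $\isnull_{B}(Y)$ would force both $X$ and $Y$ to be contractible, whence $X + Y \simeq \Bool$ which is visibly not $B$-null), so routing through $\Bool$ is essential.
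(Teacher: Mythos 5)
Your proposal is correct and reaches the same crucial point as the paper --- a map out of the proposition \(B(a)\) cannot jump between the two summands --- but it gets there by a genuinely different route. The paper factors the canonical map \(k_{X+Y} \colon X + Y \to (B(a) \to X + Y)\) through \((B(a) \to X) + (B(a) \to Y)\) and proves, in the internal language of \(\cat{E}\), that the comparison map \(\Phi \colon (B(a) \to X) + (B(a) \to Y) \to (B(a) \to X + Y)\) is an isomorphism: injectivity uses well-supportedness of \(B\), and surjectivity picks \(b_0 : B(a)\), joins an arbitrary \(b\) to \(b_0\) by a path \(p \colon \I \to B(a)\) (possible since \(B\) is a proposition in \(\cttmodel{\cat{E}}\)) and uses that \((-)^{\I}\) preserves colimits, so that \(f \circ p\) stays in one summand; the null structures on \(X\) and \(Y\) enter only via the factorisation \(k_{X+Y} = \Phi \circ (k_X + k_Y)\). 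You instead build a homotopy inverse to \(k_{X+Y}\) directly and delegate the uniformity step to the \(B\)-nullity of \(\Bool\) via Propositions \ref{prop:decidable-discrete} and \ref{prop:discrete-null}, classifying the summand by the canonical map \(X + Y \to \Bool\). This trades the paper's ingredient ``\((-)^{\I}\) preserves coproducts'' for ``coproducts are disjoint and pullback-stable, so \(X + Y \to \Bool\) detects the summand,'' and it reuses already-established facts about discrete types rather than rerunning the path argument; the price is some care at two points, which you partly flag: to extract a strict factorisation \(f = \inl \circ g\) in \(\cat{E}\) you should use the strict isomorphism \(\Bool \cong (B(a) \to \Bool)\) furnished by the proof of Proposition \ref{prop:discrete-null} (or evaluate the counit path at each \(b\) and use discreteness of \(\Bool\)), and ``the unique \(x\) with \(g = \lambda b.x\)'' is only unique up to path, i.e.\ you apply the homotopy inverse of \(k_X\) and verify the two-sided inverse laws up to homotopy, which is all that \(\mathsf{isNull}_B\) demands. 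Finally, note that both your proof and the paper's own proof use well-supportedness of \(B\), which the bare statement omits; your observation that with some \(B(a)\) empty and \(X = Y = 1\) the conclusion fails correctly shows such a hypothesis is indispensable, matching the standing assumption in the surrounding text.
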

\begin{proof}
  We proceed in the internal language of \(\E\). Suppose that \(X\)
  and \(Y\) has a \(B\)-null structure. Assume that \(a : A\) is
  given. Since the function \((X + Y) \to (B(a) \to (X + Y))\) factors
  as
  \[
    \begin{tikzcd}
      X + Y \arrow[rr] \arrow[dr] & & B(a) \to (X + Y) \\
      & (B(a) \to X) + (B(a) \to Y), \arrow[ur,"\Phi"'] &
    \end{tikzcd}
  \]
  it suffices to show that the function
  \(\Phi : ((B(a) \to X) + (B(a) \to Y)) \to (B(a) \to (X + Y))\) is
  an isomorphism. The injectivity of \(\Phi\) follows from the
  well-supportedness of \(B\). To prove the surjectivity we assume
  that \(f : B(a) \to (X + Y)\) is given. We show that
  \((\forall_{b : B(a)}fb \in X) \lor (\forall_{b : B(a)}fb \in
  Y)\). Since \(B\) is well-supported, there exists some element
  \(b_{0} : B(a)\). We know that \(fb_{0} \in X \lor fb_{0} \in
  Y\). Suppose that \(fb_{0} \in X\). Assume \(b : B(a)\) is
  given. Since \(B\) is a proposition in \(\cttmodel{\E}\), we have a
  path \(p : \I \to B(a)\) such that \(p0 = b_{0}\) and \(p1 =
  b\). Since the exponential functor \((-)^{\I}\) preserves colimits
  because it has a right adjoint, we have
  \((\forall_{i : \I}f(pi) \in X) \lor (\forall_{i : \I}f(pi) \in
  Y)\). Now \(f(p0) \in X\) and thus we have
  \(\forall_{i : \I}f(pi) \in X\). In particular, \(fb \in X\). In a
  similar manner, we have \(\forall_{b : B(a)}fb \in Y\) assuming
  \(fb_{0} \in Y\). Hence we get
  \((\forall_{b : B(a)}fb \in X) \lor (\forall_{b : B(a)}fb \in Y)\).
\end{proof}

By Corollary \ref{cor:nno-discrete}, Propositions
\ref{prop:null-closure}, \ref{prop:discrete-null} and
\ref{prop:coproduct-null}, for any type \(X\) defined in dependent
type theory only using dependent product types, dependent sum types,
identity types, unit type, disjoint finite coproducts and natural
numbers, the interpretation \(\itpr{X}^{\cttmodel{\E}}\) has a
\(B\)-null structure for any well-supported proposition \(B\) in
\(\cttmodel{\E}\). In particular, if \(\itpr{X}^{\cttmodel{\E}}\) is
inhabited, then so is \(\itpr{X}^{\cttmodel{\E}_{B}}\) for any
well-supported proposition \(B\) in \(\cttmodel{\E}\).

\begin{example}
  \label{ex:mpinnulltypes}
  \emph{Markov's Principle} is the following axiom.
  \[
    \forall_{\alpha : \N \to \Bool}\neg\neg(\exists_{n : \N}\alpha(n))
    \to \exists_{n : \N}\alpha(n)
  \]
  It is equivalent to the type
  \[
    \prod_{\alpha : \N \to \Bool}\prod_{p : (\prod_{n : \N}\alpha(n)
      \to \Initial) \to \Initial}\trunc{\sum_{n : \N}\alpha(n)}.
  \]
  For a decidable predicate \(\alpha : \N \to \Bool\), the proposition
  \(\trunc{\sum_{n : \N}\alpha(n)}\) is equivalent to the type
  \(\sum_{n : \N}\alpha(n) \times \prod_{k : \N}\alpha(k) \to n \leq
  k\) which is defined without propositional truncation. Hence, if the
  model \(\E\) of extensional dependent type theory satisfies Markov's
  Principle, then so does the model \(\cttmodel{\E}_{B}\) of univalent
  type theory for any well-supported proposition \(B\) in
  \(\cttmodel{\E}\).
\end{example}

We now show how to define nullification operators in Orton-Pitts
models. Following Rijke, Shulman and Spitters in \cite[Section
2.2]{rijkeshulmanspitters} we will first define an operator
\(\Jop_B\), although we will only consider the case of nullification,
since that is all we need here.

\begin{lemma}
  \label{lem:Joperatorexists}
  For types \(a : A \vdash_{\cttmodel{\E}} B(a)\) and
  \(\Gamma \vdash_{\cttmodel{\E}} X\), we have the higher inductive
  \(\Jop_B(X)\) defined as follows.
  \begin{itemize}
  \item When \(x : X\), then \(\Jop_B(X)\) contains an element
    \(\alpha^B_X(x)\).
  \item When \(a : A\) and \(f : B(a) \to \Kop_B\), then \(\Jop_B(X)\)
    contains an element \(\ext(a, f)\).
  \item When \(a : A\), \(f : B(a) \to \Kop_B\) and \(b : B(a)\) then
    \(\Id(\ext(a, f), f(b))\) contains an element \(\isext(a, f, b)\).
  \end{itemize}
\end{lemma}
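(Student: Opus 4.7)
The plan is to construct $\Jop_B(X)$ by adapting the $\Kop_B$ construction from the preceding subsection, with one additional point constructor indexed by $X$. Specifically, I would define $\Jop_B(X)$ as a $W$-type with reductions whose polynomial is the coproduct of three components. The first has constructors $Y := X$, empty arity, and $R := \bot$ (with the unique empty map of reductions); this yields the $\alpha^B_X$ point constructor. The second is exactly the polynomial used for $\Kop_B$: constructors $\sum_{a : A} \cone(B(a))$, arities $B(a)$, with reduction $R(a, \inr(b, i)) := (i = 1)$ reducing (via the map induced by $(i=1)$) to $b : B(a)$, and $R(a, \inl(\ast)) := \bot$. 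The third is the local fibrant replacement polynomial that freely adds an $\hcomp$ operator. This gives a cofibrant polynomial with reductions, so by Assumption \ref{asm:orton-pitts} it has a split initial algebra.

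The three constructors are then recovered as $\alpha^B_X(x) :\equiv \sup(\inl(x), !)$ where $!$ is the unique map out of the empty type, $\ext(a, f) :\equiv \paste(a, \inl(\ast), f)$, and $\isext(a, f, b)(i) :\equiv \paste(a, \inr(b, i), f)$, in direct analogy with the construction for $\Kop_B$. The reduction equation for $\paste$ forces $\isext(a, f, b)(1) = f(b)$, which is the endpoint condition expected of an element of $\Path(\ext(a, f), f(b))$; the other endpoint is an equation on $\paste$ across the pushout defining $\cone(B(a))$ and is satisfied by construction.

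To equip $\Jop_B(X)$ with a fibration structure, I would define a transport operator along the lines of Lemma \ref{lem:koptransport}, with an additional trivial case $t(\alpha^B_X(x)) :\equiv \alpha^B_X(t_X(x))$ where $t_X$ is the transport coming from the fibration structure on $X$. Since $\alpha^B_X$ has empty arity and no reductions, this case does not interact with the existing $\paste$ and $\hcomp$ clauses, so the entire argument of Lemma \ref{lem:koptransport} goes through unchanged. Composition is then obtained by combining the transport operator with the freely added $\hcomp$ via \cite[Lemma 2.5]{coquandhubermortberg}, and strict reindexing stability follows because the underlying split $W$-type with reductions construction from Section \ref{sec:presheaf-models} is stable under reindexing on the nose.

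The only genuine obstacle is verifying that adding the extra summand $X$ to the polynomial's constructors does not disturb the $W$-type with reductions construction, and this is immediate: the coproduct of cofibrant polynomials with reductions is again such a polynomial, since the cofibrant propositions are closed under the required operations and the point constructor summand contributes only trivial reductions. The elimination principle corresponding to these three constructors (which, while not stated explicitly in the lemma, is the content of calling $\Jop_B(X)$ a higher inductive type) is then proved by higher recursion exactly as in the $\Kop_B$ case, with an additional straightforward clause $s(\alpha^B_X(x)) :\equiv \eta(x)$ for whichever point-case term $\eta$ the elimination data supplies.
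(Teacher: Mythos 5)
Your construction is correct, but it takes a genuinely different route from the paper. You rebuild $\Jop_B(X)$ from scratch as a $W$-type with reductions (adding a third polynomial summand with constructors $X$, empty arity and no reductions, alongside the paste and fibrant-replacement summands), and then re-run the whole development of Section \ref{sec:high-induct-types}: a new transport clause $t(\alpha^B_X(x)) :\equiv \alpha^B_X(t_X(x))$ in the style of Lemma \ref{lem:koptransport}, a re-verification of the fibration structure, the induction principle with an extra point clause, and strict reindexing stability (which, incidentally, is already guaranteed by the split cofibrant $W$-types clause of Assumption \ref{asm:orton-pitts} in any Orton-Pitts model, not only the presheaf case you cite). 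The paper instead constructs nothing new at all: it sets $A' :\equiv A + X$ with $B'(\inl(a)) :\equiv B(a)$ and $B'(\inr(x)) :\equiv 0$, takes $\Jop_B(X) :\equiv \Kop_{B'}$, and defines $\alpha^B_X(x) :\equiv \ext(\inr(x), \bot_{\Kop_{B'}})$ where $\bot_{\Kop_{B'}}$ is the unique map $0 \to \Kop_{B'}$. Since $B'(\inr(x))$ is empty, the $\isext$ constructors at $\inr(x)$ carry no content, and the transport operator, fibration structure, elimination rule and strict stability under reindexing are all inherited verbatim from the already-established $\Kop$ construction; the only observation needed is that maps out of the empty type are unique. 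So the paper's approach buys economy — one general construction, specialized — while yours buys a more self-contained, explicit description of $\Jop_B(X)$ at the cost of duplicating the transport/fibration/elimination verifications, each of which you correctly identify as routine but must actually carry out (your elimination clause in particular is only sketched, whereas in the reduction to $\Kop_{B'}$ it comes for free).
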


\begin{proof}
  \(\Jop_B(X)\) differs from \(\Kop_B\) by having an extra point
  constructor \(\alpha^B_X : X \to \Jop_B(X)\).

  We define \(A'\) to be the type \(A + X\) and define the family of
  types \(a : A' \vdash B'(a)\) as follows.
  \begin{align*}
    B'(\inl(a)) &:\equiv B(a) \\
    B'(\inr(x)) &:\equiv 0
  \end{align*}
  We can then take \(\Jop_B(X)\) to be \(\Kop_{B'}\), as defined in
  section \ref{sec:high-induct-types}. We take \(\alpha^B_X(x)\) to be
  \(\ext(\inr(x), \bot_{\Kop_B})\) where \(\bot_{\Kop_B}\) is the
  unique map from \(0\) to \(\Kop_B\).
\end{proof}

\begin{theorem}
  \(\cttmodel{\E}\) has a nullification operator \(\loc_{B}\) for
  every type \(a : A \vdash_{\cttmodel{\E}} B(a)\).
\end{theorem}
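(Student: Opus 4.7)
The plan is to take $\loc_B X := \Jop_B(X)$ from Lemma~\ref{lem:Joperatorexists}, with unit $\eta_X := \alpha^B_X$, and to verify both (i) the universal property of $\eta_X$ and (ii) that $\loc_B X$ is itself $B$-null. Stability under reindexing is automatic, since the underlying $\Kop$ construction from Section~\ref{sec:high-induct-types} is strictly preserved by reindexing.

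For (i), given any $B$-null type $Y$ and a map $g : X \to Y$, I define the extension $\tilde g : \loc_B X \to Y$ by the recursion principle of $\Kop_{B'}$, where $B'$ is the extended family introduced in the proof of Lemma~\ref{lem:Joperatorexists}. On the constructor $\alpha^B_X(x) = \ext(\inr(x), !)$ (with $!$ the unique map out of $\Initial$), set $\tilde g(\alpha^B_X(x)) := g(x)$. On $\ext(\inl(a), f)$ with $f : B(a) \to \loc_B X$, apply the inverse $c^Y_a : Y^{B(a)} \to Y$ of $\mathsf{const}^Y_a$ (supplied by the $B$-null structure of $Y$) to the recursively defined $\tilde g \circ f$. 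The path constructor $\isext(\inl(a), f, b)$ is then dispatched using that $c^Y_a$ is a section of $\mathsf{const}^Y_a$, which supplies the required path $c^Y_a(\tilde g \circ f) = \tilde g(f(b))$. Uniqueness of $\tilde g$ is proved by a parallel induction, using that identity types of $B$-null types are $B$-null (Proposition~\ref{prop:null-closure}).

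For (ii), define $e_a(f) := \ext(\inl(a), f) : (B(a) \to \loc_B X) \to \loc_B X$ for each $a : A$. The homotopy $\mathsf{const}_a \circ e_a \sim \mathrm{id}$ follows directly from $\isext$ and function extensionality. The harder direction $e_a \circ \mathsf{const}_a \sim \mathrm{id}$ amounts to constructing, for each $x : \loc_B X$, a path $\ext(\inl(a), \lambda b.\, x) = x$. I proceed by higher recursion on $x$: at each $\ext$-constructor the inductive hypothesis pastes the recursive path using the $\mathsf{hcomp}$ operation afforded by the fibration structure of $\loc_B X$, and the coherences at the $\isext$ constructors are assembled similarly. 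At the base point $\alpha^B_X(x_0)$, where no element of $B(a)$ is \emph{a priori} available, the needed path is obtained by observing that the target identity type is itself $B$-null once the other inductive cases are in place, so the family $b \mapsto \isext(\inl(a), \lambda b'.\, \alpha^B_X(x_0), b)$ factors through a single value.

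The main obstacle is this last coherence step, where $B$-nullness of $\loc_B X$ is reduced to $B$-nullness of its identity types, which itself rests on $B$-nullness of $\loc_B X$. Following~\cite{rijkeshulmanspitters}, the circularity is broken either by a simultaneous induction bundling all required coherences into a single higher recursive definition, or by iterating $\Jop_B$ along a sequential colimit until all the needed paths appear; both options are available in $\cttmodel{\E}$ via the HIT machinery of Section~\ref{sec:high-induct-types}, and either route yields the desired nullification operator $\loc_B$ for every type family $B$.
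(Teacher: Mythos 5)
There is a genuine gap, and it sits exactly where you flag it. You take \(\loc_{B}X := \Jop_{B}(X)\) and then must show that \(\Jop_{B}(X)\) is itself \(B\)-null, but the \(\ext\)/\(\isext\) constructors only make each precomposition map \(\lambda x.\lambda b.x : \Jop_{B}(X) \to (B(a) \to \Jop_{B}(X))\) \emph{split} (they provide a section \(e_{a}\)), not an equivalence. The missing half, \(e_{a} \circ (\lambda x.\lambda b.x) \sim \mathrm{id}\), amounts to being able to produce a path \(x = y\) from a \(B(a)\)-indexed family of paths, i.e.\ to the \(B\)-nullness of the identity types of \(\Jop_{B}(X)\); your base case appeals to precisely that, which (as in Proposition \ref{prop:null-closure}) would have to come from the nullness of \(\Jop_{B}(X)\) that you are in the middle of proving. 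This is not a presentational circularity that a cleverer induction dissolves: it is the well-known fact, made explicit by Rijke, Shulman and Spitters, that ``\(F\)-split'' does not imply ``\(F\)-local,'' and \(\Jop_{B}(X)\) is in general not \(B\)-null. Neither of your proposed escapes closes the gap. A ``simultaneous induction bundling all coherences'' is not licensed by the elimination rule of \(\Kop_{B'}\) (the motive cannot presuppose the null structure being constructed), and the sequential-colimit route would require both sequential colimits of fibrant types in \(\cttmodel{\mathcal{E}}\), which the paper never constructs, and commutation of \((-)^{B(a)}\) with such colimits, which needs a compactness hypothesis on \(B(a)\) and fails for arbitrary families.

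The paper's proof takes a different, one-step route: it invokes \cite[Theorem 2.16]{rijkeshulmanspitters}, where one applies the \(\Jop\)-construction not to \(B\) alone but to the family enlarged by the codiagonals of the maps \(B(a) \to 1\). For nullification the relevant pushout \(1 \sqcup_{B(a)} 1\) is just the suspension of \(B(a)\); splitting precomposition along \(B(a)\) \emph{and} along its suspension forces the map \(X \to X^{B(a)}\) to be a genuine equivalence (the suspension constructors are exactly the ``turn a \(B(a)\)-family of paths into a path'' data you were missing). This is why Theorem \ref{thm:suspensionexist} is proved beforehand: with suspensions available and the \(\Jop\) operator of Lemma \ref{lem:Joperatorexists} implemented via \(\Kop\), the argument of Rijke, Shulman and Spitters applies directly. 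Your unit \(\alpha^{B}_{X}\), the verification of the universal property by \(\Kop\)-recursion, and the remark about strict stability under reindexing can all be kept, but the underlying higher inductive type must carry the extra constructors corresponding to the suspensions; without them the claimed null structure on \(\loc_{B}X\) is not there.
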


\begin{proof}
  This follows from \cite[Theorem 2.16]{rijkeshulmanspitters},
  observing that for the case of nullification the pushout appearing
  there is just a suspension, which we have already shown how to
  implement in theorem \ref{thm:suspensionexist}, and we
  showed in lemma \ref{lem:Joperatorexists} how to implement their
  \(\Jop\) operator.
\end{proof}
\endgroup
\endgroup

\section{Church's Thesis in Null Types}
\label{sec:churchs-thesis-in-null-types}
\begingroup
\newcommand{\E}{\mathcal{E}}
\newcommand{\cS}{\mathcal{S}}
\newcommand{\KOne}{\mathcal{K}_{1}}

Consider a dependent type theory with dependent product
types, dependent sum types, identity types, unit type, disjoint finite
coproducts, propositional truncation and natural numbers. Let \(a : A
\vdash B(a)\) be a type in this type theory where \(A\) and \(B\) are
definable only using
dependent product types, dependent sum types, identity type of
\(\Bool\), unit type, finite coproducts and natural numbers. We define
\(a : A \vdash C(a) := \trunc{B(a)}\). For an Orton-Pitts model
\(\cttmodel{\E}\), the underlying types of the interpretations
\(\itpr{A}^{\cttmodel{\E}}\) and \(\itpr{B}^{\cttmodel{\E}}\) are
\(\itpr{A}^{\E}\) and \(\itpr{B}^{\E}\) respectively.

\begin{theorem}
  \label{thm:unnamed1}
  Let \(\cttmodel{\E}\) be an Orton-Pitts model. Suppose that
  \(\itpr{B}^{\cttmodel{\E}}\) is well-supported. Then the proposition
  \(C\) holds in the model of univalent type theory
  \(\cttmodel{\E}_{\itpr{C}^{\cttmodel{\E}}}\).
\end{theorem}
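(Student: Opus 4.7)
The plan is to exploit Corollary \ref{cor:nullification-contractible}: nullifying a proposition with respect to itself makes it contractible. Applied with $B$ replaced by $\itpr{C}^{\cttmodel{\E}}$, the family $a : A \vdash \loc_{\itpr{C}^{\cttmodel{\E}}} \itpr{C(a)}^{\cttmodel{\E}}$ is pointwise contractible, and by Corollary \ref{cor:truncation-of-null-types} this is precisely the interpretation of the family $a : A \vdash \trunc{B(a)}$ inside $\cttmodel{\E}_{\itpr{C}^{\cttmodel{\E}}}$, since propositional truncation in the null-types model is computed by $\loc_{\itpr{C}^{\cttmodel{\E}}} \trunc{-}$. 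Extracting a center of contraction pointwise then gives the required witness.

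Before running this one-line argument I would discharge two bookkeeping tasks. First, to invoke the nullification operator on $\itpr{C}^{\cttmodel{\E}}$ I need it to exist, which follows from the final theorem of Section \ref{sec:null-types-in-orton-pitts} because $\itpr{C}^{\cttmodel{\E}}$, being a propositional truncation, is a proposition in $\cttmodel{\E}$. Second, for the statement to make sense inside the null-types submodel, the ambient context type $\itpr{A}^{\cttmodel{\E}}$ must carry an $\itpr{C}^{\cttmodel{\E}}$-null structure. For this I first verify that $\itpr{C}^{\cttmodel{\E}}$ is well-supported in $\E$: a global section of $\trunc{\itpr{B}^{\cttmodel{\E}}}$ in $\E$ exists by hypothesis, and pushing along the underlying $\E$-map of the $\cttmodel{\E}$-constructor $\itpr{B} \to \itpr{C}$ together with the functoriality of $\E$-truncation yields a global section of $\trunc{\itpr{C}^{\cttmodel{\E}}}$. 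Then the remark following Proposition \ref{prop:coproduct-null}, combined with Corollary \ref{cor:nno-discrete} and Propositions \ref{prop:null-closure}, \ref{prop:discrete-null} and \ref{prop:coproduct-null}, equips $A$, which is built from only the allowed constructors, with the required null structure, and makes $\prod_{a : A} \itpr{C(a)}$ a well-typed expression in $\cttmodel{\E}_{\itpr{C}^{\cttmodel{\E}}}$.

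The only point that I expect to require genuine care is keeping clear, across the three models $\E$, $\cttmodel{\E}$, and $\cttmodel{\E}_{\itpr{C}^{\cttmodel{\E}}}$, which propositional truncation is meant at each step: the $\trunc{B(a)}$ on the right-hand side of $\loc_{\itpr{C}^{\cttmodel{\E}}} \trunc{B(a)}$ is the truncation taken in $\cttmodel{\E}$ rather than in the null-types submodel (this is the content of Corollary \ref{cor:truncation-of-null-types}), and only with this reading does the crucial identification $\loc_{\itpr{C}^{\cttmodel{\E}}} \trunc{\itpr{B(a)}^{\cttmodel{\E}}} = \loc_{\itpr{C}^{\cttmodel{\E}}} \itpr{C(a)}^{\cttmodel{\E}}$ hold by the very definition $C := \trunc{B}$. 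Once that identification is made, Corollary \ref{cor:nullification-contractible} finishes the proof.
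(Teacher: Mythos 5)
Your argument is correct and is essentially the paper's own proof: one deduces well-supportedness of \(D = \itpr{C}^{\cttmodel{\mathcal{E}}}\) from that of \(\itpr{B}^{\cttmodel{\mathcal{E}}}\), equips the relevant interpretations with \(D\)-null structures via Corollary \ref{cor:nno-discrete} and Propositions \ref{prop:null-closure}, \ref{prop:discrete-null} and \ref{prop:coproduct-null}, and concludes with Corollaries \ref{cor:truncation-of-null-types} and \ref{cor:nullification-contractible}. The only detail to make explicit is that, besides \(\itpr{A}^{\cttmodel{\mathcal{E}}}\), the type \(\itpr{B}^{\cttmodel{\mathcal{E}}}\) must also be given a \(D\)-null structure so that its interpretation in the submodel \(\cttmodel{\mathcal{E}}_{D}\) has the same underlying type as in \(\cttmodel{\mathcal{E}}\) --- this is what justifies the identification \(\itpr{C}^{\cttmodel{\mathcal{E}}_{D}} = \loc_{D}\trunc{\itpr{B}^{\cttmodel{\mathcal{E}}}} = \loc_{D}\itpr{C}^{\cttmodel{\mathcal{E}}}\) --- and it follows from the same remark you cite, since \(B\) is also built from only the allowed constructors.
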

\begin{proof}
  Let \(D = \itpr{C}^{\cttmodel{\E}}\). By assumption
  \(\itpr{B}^{\cttmodel{\E}}\) is well-supported and so is its
  truncation \(D\). By Corollary \ref{cor:nno-discrete} and
  Propositions \ref{prop:null-closure}, \ref{prop:discrete-null} and
  \ref{prop:coproduct-null}, \(\itpr{A}^{\cttmodel{\E}}\) and
  \(\itpr{B}^{\cttmodel{\E}}\) have \(D\)-null structures. Hence we
  have \(\itpr{C}^{\cttmodel{\E}_{D}} =
  \loc_{D}\itpr{C}^{\cttmodel{\E}}\) by Corollary
  \ref{cor:truncation-of-null-types}, and this type is inhabited by
  Corollary \ref{cor:nullification-contractible}.
\end{proof}

\begin{corollary}
  \label{cor:unnamed2}
  Let \(\cS\) be a model of type theory as in Section
  \ref{sec:presheaf-models}. If the proposition \(C\) holds in
  \(\cS\), then \(C\) also holds in the model of univalent type theory
  \(\cttmodel{\E}_{\itpr{C}^{\cttmodel{\E}}}\) where \(\E =
  \PSh(\Box)\).
\end{corollary}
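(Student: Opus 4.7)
The plan is to reduce the corollary directly to Theorem \ref{thm:unnamed1}, whose hypothesis is that \(\itpr{B}^{\cttmodel{\E}}\) is well-supported, instantiated at \(\E = \PSh(\Box)\). Once that well-supportedness is established, Theorem \ref{thm:unnamed1} immediately yields that \(C\) holds in \(\cttmodel{\PSh(\Box)}_{\itpr{C}^{\cttmodel{\PSh(\Box)}}}\). So the whole task is to deduce well-supportedness of \(\itpr{B}^{\cttmodel{\PSh(\Box)}}\) from the assumption that \(C = \trunc{B}\) already holds in the extensional model \(\cS\).

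First, I would identify the underlying presheaf of \(\itpr{B}^{\cttmodel{\PSh(\Box)}}\). Because \(B\) is built using only dependent products, dependent sums, \(\Id_{\Bool}\), unit, finite coproducts and natural numbers, and each of these constructors has the same underlying object in the Orton-Pitts cwf \(\cttmodel{\PSh(\Box)}\) as in the extensional cwf on \(\PSh(\Box)\) (the only subtle case being \(\Id_{\Bool}\), which is fine since \(\Bool\) is discrete, cf.\ Section \ref{sec:discrete-types}, so the Path-based intensional identity type is canonically isomorphic to the extensional one), the underlying presheaf of \(\itpr{B}^{\cttmodel{\PSh(\Box)}}\) coincides with \(\itpr{B}^{\PSh(\Box)}\) computed in the extensional cwf. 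Next, Proposition \ref{prop:constant-presheaf-preservation}, together with the fact that the constant-presheaf functor \(\Delta : \cS \to \PSh(\Box)\) also preserves the terminal object and finite coproducts (since it is a left adjoint to evaluation at \(0 \in \Box\)), yields a canonical isomorphism \(\itpr{B}^{\PSh(\Box)} \cong \Delta(\itpr{B}^{\cS})\).

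The hypothesis that \(C\) holds in \(\cS\) says, in the extensional cwf \(\cS\), that the canonical map \(\itpr{B}^{\cS} \to 1\) is a cover (a regular epi), since in any extensional model the propositional truncation of a type \(X\) is the image of \(X \to 1\). Because \(\Delta\) is a left adjoint it preserves this cover, so \(\itpr{B}^{\PSh(\Box)} \cong \Delta(\itpr{B}^{\cS}) \to 1\) is a cover in \(\PSh(\Box)\); equivalently, \(\trunc{\itpr{B}^{\PSh(\Box)}}\) is inhabited in the extensional cwf on \(\PSh(\Box)\). By the identification from the previous paragraph this is precisely the well-supportedness of \(\itpr{B}^{\cttmodel{\PSh(\Box)}}\), and Theorem \ref{thm:unnamed1} then concludes. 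The main obstacle I anticipate is the bookkeeping of the two different interpretations of \(B\)---one in the Orton-Pitts univalent model and one in the extensional cwf on \(\PSh(\Box)\)---so that Proposition \ref{prop:constant-presheaf-preservation} can be legitimately applied; once that identification is fixed, the transfer along the left adjoint \(\Delta\) is essentially automatic.
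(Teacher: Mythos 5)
Your proposal is correct and follows essentially the same route as the paper's proof: deduce well-supportedness of \(\itpr{B}^{\cS}\) from the hypothesis, transfer it along the constant presheaf functor \(\Delta\) (using that it preserves the relevant type-theoretic structure) to get well-supportedness of \(\itpr{B}^{\cttmodel{\E}}\), and then invoke Theorem \ref{thm:unnamed1}. You simply spell out in more detail the identification of underlying types and the cover-preservation argument that the paper compresses into a single sentence.
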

\begin{proof}
  Since \(C = \trunc{B}\) holds in \(\cS\), the type
  \(\itpr{B}^{\cS}\) is well-supported. Then
  \(\itpr{B}^{\cttmodel{\E}} = \itpr{B}^{\E}\) is also well-supported
  because the constant presheaf functor \(\cS \to \E\) preserves all
  structures of the type theory. Then use Theorem \ref{thm:unnamed1}.
\end{proof}

\begin{example}
  \label{exm:church-thesis}
  Recall that Church's Thesis is equivalent to the type
  \[
    \prod_{f : \N \to \N}\trunc{\sum_{e : \N}\prod_{x : \N}\sum_{z :
        \N}T(e, x, z) \times U(z) = f(x)}.
  \]
  Also note that the equality of natural numbers is decidable, and
  thus there exists a function \(=_{\N} : \N \to \N \to \Bool\) such
  that the type \(U(z) = f(x)\) is equivalent to \((U(z) =_{\N} f(x))
  = 1\). Therefore Church's Thesis is equivalent to a type of the form
  \[
    \prod_{a : A}\trunc{B(a)}
  \]
  with a type \(a : A \vdash B(a)\) definable only using dependent
  product types, dependent sum types, identity of \(\Bool\), unit
  type, finite coproducts and natural numbers. Since Church's Thesis
  holds in the category \(\Asm(\KOne)\) of assemblies on Kleene's
  first model \(\KOne\), by Corollary \ref{cor:unnamed2} the model of
  univalent type theory \(\cttmodel{\E}_{\itpr{C}^{\cttmodel{\E}}}\)
  satisfies Church's Thesis where \(\E = \CAsm(\KOne)\).
\end{example}

We can now prove our second main result, which informally says that
univalent type theory is consistent with the main principles of
Recursive Constructive Mathematics.
\begin{theorem}
  \label{thm:ctconsistent}
  Martin-L\"{o}f type theory remains consistent when all of the
  following extra structure and axioms are added.
  \begin{enumerate}
  \item Propositional truncation.
  \item The axiom of univalence.
  \item Church's Thesis.
  \item Markov's Principle.      
  \end{enumerate}
\end{theorem}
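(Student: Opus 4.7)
The plan is to exhibit a single model in which all four properties hold simultaneously, by combining the cubical assemblies construction with the nullification machinery developed in Section~\ref{sec:churchs-thesis-in-null-types}. Take \(\E = \CAsm(\KOne)\), the category of internal cubical objects in the assemblies on Kleene's first partial combinatory algebra \(\KOne\). By the example in Section~\ref{sec:presheaf-models}, \(\E\) satisfies Assumption~\ref{asm:orton-pitts}, so the Orton--Pitts construction yields a model \(\cttmodel{\E}\) of univalent type theory with propositional truncation. Since Church's Thesis holds in \(\Asm(\KOne)\) in the usual sense of realizability, and the constant presheaf functor preserves the relevant structure, the interpretation \(\itpr{B}^{\cttmodel{\E}}\) of the underlying untruncated predicate in Example~\ref{exm:church-thesis} is well-supported.

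Let \(C\) denote the proposition expressing Church's Thesis, rewritten as \(\prod_{a:A}\trunc{B(a)}\) as in Example~\ref{exm:church-thesis}, and set \(D = \itpr{C}^{\cttmodel{\E}}\). The model I would use is the sub-cwf of \(D\)-null types \(\cttmodel{\E}_{D}\). I would then verify the four axioms in turn. First, univalent Martin-L\"of type theory is supported: dependent products, dependent sums and intensional identity types transfer from \(\cttmodel{\E}\) by Proposition~\ref{prop:null-closure}, and by Proposition~\ref{prop:nullification-universe} the chain of universes of \(\cttmodel{\E}\) restricts to a chain of univalent universes of \(D\)-null types. Second, propositional truncation in \(\cttmodel{\E}_{D}\) is given by \(X \mapsto \loc_{D}\trunc{X}\) by Corollary~\ref{cor:truncation-of-null-types}. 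Third, Church's Thesis itself holds in \(\cttmodel{\E}_{D}\) by the application of Corollary~\ref{cor:unnamed2} worked out in Example~\ref{exm:church-thesis}, using precisely the well-supportedness of \(\itpr{B}^{\cttmodel{\E}}\) established above.

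Fourth, for Markov's Principle I would invoke Example~\ref{ex:mpinnulltypes}: since MP for decidable predicates is equivalent to a quantifier-free statement without any propositional truncation, the argument of that example shows that MP in the extensional model \(\E\) transfers to \(\cttmodel{\E}_{D}\) for any well-supported \(D\). Markov's Principle is classically known to hold in \(\Asm(\KOne)\), hence in \(\E = \CAsm(\KOne)\), so it holds in \(\cttmodel{\E}_{D}\).

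The work is essentially assembling results already proved in the paper; there is no new mathematical obstacle at this step. The only point requiring care is checking that \(D\) satisfies the well-supportedness hypothesis needed throughout Section~\ref{sec:null-types-in-orton-pitts} (otherwise Propositions~\ref{prop:discrete-null} and~\ref{prop:coproduct-null} fail and null structures on \(\N\) and on coproducts are not available). This is immediate in our setting because \(D\) is the interpretation of the truncation of a predicate whose untruncated form is already realized in \(\Asm(\KOne)\), so \(\trunc{D}\) is inhabited in \(\E\). With this observation, the four axioms coexist in \(\cttmodel{\E}_{D}\), establishing consistency.
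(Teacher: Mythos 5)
Your construction coincides with the paper's: the model is \(\cttmodel{\mathcal{E}}_{D}\) with \(\mathcal{E} = \CAsm(\mathcal{K}_{1})\) and \(D\) the interpretation of Church's Thesis, and your verification of truncation, univalence, CT and MP uses the same ingredients (Propositions \ref{prop:null-closure} and \ref{prop:nullification-universe}, Corollary \ref{cor:truncation-of-null-types}, Examples \ref{exm:church-thesis} and \ref{ex:mpinnulltypes}). But there is a genuine gap: you never show that the model is non-degenerate. Exhibiting a structure satisfying all the axioms does not by itself prove consistency, because the trivial model (every type contractible) also satisfies them together with an element of \(\Initial\); and a reflective subuniverse can perfectly well be trivial --- nullifying at an empty proposition forces every null type to be contractible. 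So the step that actually delivers the theorem is showing that \(\Initial\) has no closed element in \(\cttmodel{\mathcal{E}}_{D}\). The paper does exactly this as the final step of its proof: since \(D\) is well-supported, Corollary \ref{cor:botnull} gives \(\Initial\) a \(D\)-null structure, so the empty type of the null-type model is the empty type of cubical assemblies itself, which has no global sections; hence there is no element of type \(\Initial\) in the model. You flag well-supportedness of \(D\) as the ``only point requiring care,'' but you use it only to obtain null structures on \(\N\) and on coproducts; the same hypothesis is also what rules out triviality, and without invoking Corollary \ref{cor:botnull} (or an equivalent argument) your final sentence ``the four axioms coexist \dots establishing consistency'' does not follow.

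A smaller gloss: for Markov's Principle you assert it holds in \(\Asm(\mathcal{K}_{1})\) ``hence in \(\CAsm(\mathcal{K}_{1})\).'' That transfer is not automatic; it works because the truncation-free reformulation of MP for decidable predicates (Example \ref{ex:mpinnulltypes}) is built only from structure preserved by the constant presheaf functor, which is how the paper argues it, and your write-up should make that step explicit as well.
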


\begin{proof}
  We prove consistency by constructing a model where all of the above
  holds and where there is no element of type \(\bot\). Consider the
  Orton-Pitts model \(\cttmodel{\E}\) with \(\E = \CAsm(\KOne)\). We
  have seen that \(\cttmodel{\E}\) satisfies Church's Thesis in
  Example \ref{exm:church-thesis}. It remains to show that
  \(\cttmodel{\E}\) satisfies Markov's Principle and \(\bot\) is empty
  in this model.

  Using well supportness again, and example \ref{ex:mpinnulltypes} we
  see that to show Markov's principle holds, it suffices to show
  it holds in cubical assemblies (as a model of extensional type
  theory). Again, we observe that the type corresponding to Markov's
  principle is preserved by the constant presheaves functor, and so it
  suffices to show that Markov's principle holds in assemblies, which
  is again a standard argument.

  Using well supportness once more, and corollary \ref{cor:botnull} we
  see that \(\bot\) is the same in null types as in cubical
  assemblies. It follows that it has no global sections, i.e. there
  is no element of type \(\bot\) in the model.
\end{proof}

We can use Theorem \ref{thm:unnamed1} for other principles.

\begin{example}
  \emph{Brouwer's Continuity Principle} is the following axiom.
  \[
    \forall_{F : (\N \to \N) \to \N}\forall_{\alpha : \N \to
      \N}\exists_{n : \N}\forall_{\beta : \N \to \N}(\forall_{m : \N}m
    < n \to \alpha(m) = \beta(m)) \to F(\alpha) = F(\beta)
  \]
  The standard ordering \(<\) on \(\N\) is decidable, and thus
  Brouwer's Continuity Principle is an instance of Theorem
  \ref{thm:unnamed1}.
\end{example}

We obtain a new proof of the following result originally proved by
Coquand using cubical stacks
\cite{coquandcubicalstacks}\footnote{Coquand's proof can also be said
  to use reflective subuniverses, although in a very different way to
  our proof.}. See also \cite{cmrstacks} for an earlier stack model
based on groupoids.

\begin{theorem}
  Martin-L\"{o}f type theory remains consistent when all of the
  following extra structure and axioms are added.
  \begin{enumerate}
  \item Propositional truncation.
  \item The axiom of univalence.
  \item Brouwer's Continuity Principle.
  \end{enumerate}  
\end{theorem}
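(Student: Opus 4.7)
The plan is to follow the template of Theorem \ref{thm:ctconsistent}, but starting from a base model of extensional type theory that validates Brouwer's Continuity Principle rather than Church's Thesis. The natural candidate is $\Asm(\mathcal{K}_2)$, assemblies over Kleene's second model $\mathcal{K}_2$ of continuous functionals on Baire space, where the Continuity Principle is well known to hold internally by the standard realizability argument.

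First I would verify that $\cS = \Asm(\mathcal{K}_2)$ satisfies the structural hypotheses of Section \ref{sec:presheaf-models}, namely dependent products, sums, extensional identity, unit, finite colimits, $W$-types and a countable chain of universes; these go through in essentially the same way as for $\Asm(\KOne)$. Then set $\E = \CAsm(\mathcal{K}_2)$, the internal cubical objects in $\cS$; by the theorem in Section \ref{sec:presheaf-models} this yields a model satisfying Assumption \ref{asm:orton-pitts}, and hence an Orton-Pitts model $\cttmodel{\E}$ supporting propositional truncation, a chain of univalent universes, and the remaining type formers.

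To validate Brouwer's Continuity Principle in the final model, I would invoke the Example immediately preceding the theorem, which presents the principle in the form $C \equiv \prod_{a:A}\trunc{B(a)}$ with $a:A \vdash B(a)$ built only from the primitive constructors allowed by Theorem \ref{thm:unnamed1}. Since $C$ already holds in $\cS$, $\itpr{B}^{\cS}$ is well-supported there, and the constant presheaf functor of Proposition \ref{prop:constant-presheaf-preservation} transports this to well-supportedness of $\itpr{B}^{\cttmodel{\E}}$. Applying Corollary \ref{cor:unnamed2} then delivers $C$ in $\cttmodel{\E}_{D}$ for $D = \itpr{C}^{\cttmodel{\E}}$, while propositional truncation and univalence transfer from $\cttmodel{\E}$ to $\cttmodel{\E}_{D}$ via Corollary \ref{cor:truncation-of-null-types} and Proposition \ref{prop:nullification-universe}. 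Emptiness of $\bot$ follows exactly as in Theorem \ref{thm:ctconsistent}, reducing via Corollary \ref{cor:botnull} to the non-degeneracy of $\Asm(\mathcal{K}_2)$.

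The hard part is external rather than structural: checking that $\Asm(\mathcal{K}_2)$ really does satisfy both the hypotheses of Section \ref{sec:presheaf-models} and the internal validity of the Continuity Principle. Both are standard in the realizability literature and parallel the corresponding facts used for $\Asm(\KOne)$ in Theorem \ref{thm:ctconsistent}, so once they are in place the argument is essentially an assembly of the components already built in the paper.
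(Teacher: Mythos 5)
Your argument is correct, but it takes a genuinely different route from the paper's on the one point where a choice has to be made: the base realizability model. The paper does not switch to Kleene's second model; its proof is literally ``the same as for Theorem \ref{thm:ctconsistent}'', i.e.\ it keeps $\E = \CAsm(\KOne)$ and cites van Oosten (Proposition 3.1.6) for the nontrivial external fact that Brouwer's Continuity Principle already holds in the effective topos, the same (Kreisel--Lacombe--Shoenfield-style) argument applying to assemblies over $\mathcal{K}_1$. You instead take $\cS = \Asm(\mathcal{K}_2)$, where continuity of all functionals $(\N\to\N)\to\N$ is essentially immediate from the definition of function realizability, and then run the same pipeline (Section \ref{sec:presheaf-models}, Assumption \ref{asm:orton-pitts}, the Example reducing the principle to the $\prod_{a:A}\trunc{B(a)}$ format via decidability of $<$ and $=_\N$, Corollary \ref{cor:unnamed2}, and Corollary \ref{cor:botnull} for non-degeneracy). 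Both routes are sound: yours trades the KLS-type input for the routine verification that $\Asm(\mathcal{K}_2)$ satisfies the structural hypotheses (which indeed holds for assemblies over any pca, just as for $\KOne$), and it is arguably more self-evidently ``continuous''; the paper's choice buys the stronger by-product that one and the same model $\cttmodel{\CAsm(\KOne)}_{D}$ can witness Church's Thesis, Markov's Principle and the Continuity Principle simultaneously, whereas your $\mathcal{K}_2$-based model validates continuity but not Church's Thesis. Since the theorem only asserts consistency of the three listed items, your proof establishes it.
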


\begin{proof}
  This is the same as for theorem \ref{thm:ctconsistent}. See
  e.g. \cite[Proposition 3.1.6]{vanoosten} for a proof that Brouwer's
  principle holds in the the effective topos (the same proof applies
  for assemblies).
\end{proof}

\endgroup

\section{Conclusion and Further Work}
\label{sec:concl-furth-work}

We have constructed a model of type theory that satisfies the main
axiom of homotopy type theory (univalence) and the main axioms of
recursive constructive mathematics (Church's thesis and Markov's
principle). However, in both fields there are additional axioms that
are natural to consider, but which we have left for future work.

With regards to homotopy type theory, we expect that the
remaining higher inductive types appearing in \cite{hottbook} can be
implemented following the technique suggested in \cite[Remark
3.23]{rijkeshulmanspitters} together with the technique of
\cite{coquandhubermortberg} for constructing the necessary higher
inductive types in cubical assemblies.

The situation with the remaining axioms of recursive constructive
mathematics is more difficult. The axiom of countable choice is often
included, but it is unclear whether countable choice holds in our
model, or how to adjust the model to ensure countable choice does
hold. The other main axiom of recursive constructive mathematics is
extended Church's thesis, which states that certain partial functions
from \(\N\) to \(\N\) are computable. The main issue here is that it
is unclear what is the most natural way to formulate partial functions
in homotopy type theory. Much progress on this has been made by
Escard\'{o} and Knapp in \cite{escardoknapp}. However, as they show, a
weak form of countable choice is needed for their definition to work
as expected. We expect that for any reasonable formulation of extended
Church's thesis Theorem \ref{thm:unnamed1} can be used to construct
a model where it holds.

Another open problem is to find a good definition of
\((\infty, 1)\)-effective topos, which should be to the effective
topos what \((\infty, 1)\)-toposes are to Grothendieck toposes. In
particular the effective topos should be recovered as the localisation
of the hsets in the \((\infty, 1)\)-effective topos, and commonly seen
theorems and definitions in the effective topos should be special
cases of corresponding higher versions. One possible definition is
cubical assemblies. We can now see another possibility in the form of
reflective subuniverses of cubical assemblies. However, our definition
is dependent on particular a choice of axioms that satisfy the
necessary conditions to apply Theorem \ref{thm:unnamed1}, so we leave
open the problem of finding a ``natural'' definition that satisfies
axioms such as Church's thesis without needing to ensure they hold in
the definition.

\bibliography{references}

\end{document}